\newtheorem{theo}{Theorem}[section]
\newtheorem{thm}[theo]{Theorem}
\newtheorem{lem}[theo]{Lemma}
\newtheorem{prop}[theo]{Proposition}
\newtheorem{cor}[theo]{Corollary}
\newtheorem{lemma}[theo]{Lemma}
 \theoremstyle{definition}
\newtheorem{definition}[theo]{Definition}
\newtheorem{example}[theo]{Example}
 \theoremstyle{remark}
 \numberwithin{equation}{section}
\newtheorem{remark}[theo]{Remark}
\newcommand{\betheo}{\begin{theo}$\!\!\!${\bf } }
\newcommand{\entheo}{\end{theo}}
\newcommand{\becor}{\begin{cor}$\!\!\!$  }
\newcommand{\encor}{\end{cor}}
\newcommand{\belem}{\begin{lem}$\!\!\!${\bf .} }
\newcommand{\enlem}{\end{lem}}
\newcommand{\beprop}{\begin{prop}$\!\!\!${\bf .} }
\newcommand{\enprop}{\end{prop}}
\newcommand{\bedefi}{\begin{definition}$\!\!\!$ \rm }
\newcommand{\findefi}{ \end{definition}}
\newcommand{\beex}{\begin{example}$\!\!\!$ \rm }
\newcommand{\enex}{ \end{example}}
\newcommand{\berem}{\begin{remark}$\!\!\!$ \rm }
\newcommand{\enrem}{ \end{remark}}
\newcommand{\be}{\begin{equation}}
\newcommand{\en}{\end{equation}}
\newcommand{\bea}{\begin{eqnarray}}
\newcommand{\ena}{\end{eqnarray}}
\newcommand{\beano}{\begin{eqnarray*}}
\newcommand{\enano}{\end{eqnarray*}}
\newcommand{\bee}{\begin{enumerate}}
\newcommand{\ene}{\end{enumerate}}
\newcommand{\bei}{\begin{itemize}}
\newcommand{\eni}{\end{itemize}}
\newcommand{\betab}{\begin{tabular}}
\newcommand{\entab}{\end{tabular}}
\newcommand{\bd}{\begin{displaymath}}
\def\D{{\mathcal D}}
\def\H{{\mathcal H}}
\def\K{{\mathcal K}}
\def\M{{\mathcal M}}
\def\J{\relax\ifmmode {\mathcal J}\else${\mathcal J}$\fi}
\def\x{\relax\ifmmode {\mbox{*}}\else*\fi}
\newcommand{\mc}{\mathcal}
\newcommand{\mb}{\mathbb}
\newcommand{\vp}{\varphi}
\newcommand{\ip}[2]{\left\langle {#1}\left|{#2}\right.\right\rangle}
\def\OL{\relax\ifmmode {\sf L}\else{\textsf L}\fi}
\def\OR{\relax\ifmmode {\sf R}\else{\textsf R}\fi}
\newcommand{\RN}{\mb R}
\newcommand{\bdim}{\begin{proof}}
  \newcommand{\edim}{\end{proof}}
\begin{document}

\title[Non-self-adjoint resolutions of the identity]{Non-self-adjoint resolutions of the identity and associated operators\\}
%\thanks{2000 MSC: 46K05}

\author[A. Inoue]{Atsushi Inoue}
\address{Department of Applied Mathematics, Fukuoka University, Fukuoka 814-0180, Japan}
\email{a-inoue@fukuoka-u.ac.jp}
\author[C.Trapani]{Camillo Trapani}
\address{%
Dipartimento di Matematica e Informatica \\
Universit\`a di Palermo\\
I-90123 Palermo\\
Italy}
\email{camillo.trapani@unipa.it}

\maketitle

\begin{abstract} Closed operators in Hilbert space defined by a non-self-adjoint resolution of the identity $\{X(\lambda)\}_{\lambda\in {\mb R}}$, whose adjoints constitute also a resolution of the identity, are studied . In particular, it is shown that a closed operator $B$ has a spectral representation analogous to the familiar one for self-adjoint operators if and only if $B=TAT^{-1}$ where $A$ is self-adjoint and $T$ is a bounded operator with bounded inverse.
\end{abstract}
\section{Introduction}
In recent years there has been an increasing interest on non-self-adjoint operators with real spectrum,  because of the important role they play in the so-called pseudo-hermitian quantum mechanics,  an unconventional approach to this branch of physics, based on the use of non-self-adjoint Hamiltonians \cite{bender, mosta1}. Often self-adjointness can be restored by changing the {environment}: in fact, if a closed operator $H'$ can be expressed as $H'=THT^{-1}$, where $H$ is self-adjoint and $T$ is a bounded operator with bounded inverse then $H$ and $H'$ have the same spectrum. If this condition is satisfied, $H'$ and $H$ are said to be {\em similar}.
This relation can be interpreted as the possibility of defining a (possibly, indefinite) inner product which makes $H'$ into a self-adjoint operator  with respect to the new metric.
In this case, if $H= \int_{\mb R} \lambda dE(\lambda)$ is the spectral representation of $H$, then $H'= \int_{\mb R} \lambda dX(\lambda)$ where $X(\lambda)=TE(\lambda)T^{-1}$. The family $\{X(\lambda)\}_{\lambda\in {\mb R}}$ obtained in this way behaves under many respects in analogous way to an ordinary spectral family with the crucial difference that its elements are non-self-adjoint projections (this means that, for every $\lambda \in {\mb R}$, $X(\lambda)^2=X(\lambda)$ but $X(\lambda)^*\neq X(\lambda)$, in general).

This observation is the starting point of this paper. In fact, we will consider, as suggested by the previous example, a non-self-adjoint resolution of the identity $\{X(\lambda)\}_{\lambda\in {\mb R}}$  enjoying prescribed regularity properties (monotonicity,  uniform boundedness, etc.); in particular we will focus our attention to the case where $\{X(\lambda)^*\}_{\lambda\in {\mb R}}$ is a resolution of the identity too (we speak in this case of a *-resolution of the identity) and study closed operators that are associated to it. To be clearer, let us consider an {\em ordinary} spectral family $\{E(\lambda)\}_{\lambda\in {\mb R}}$ consisting of self-adjoint (or orthogonal) projections in Hilbert space $\H$. Then, as it is well-known, this family defines uniquely a self-adjoint operator $A$ whose domain
$$D(A)=\left\{ \xi \in \H: \int_{\mb R}\lambda^2 d\ip{E(\lambda)\xi}{\xi} <\infty\right\} $$ can be expressed in several equivalent ways due to the equalities
\begin{equation}\label{eq_equalities} \ip{E(\lambda)\xi}{\xi}= \|E(\lambda)\xi\|^2 = \ip{(E(\lambda)^*E(\lambda))^{1/2}\xi}{\xi}, \quad \xi \in \H.\end{equation}
These equalities do not hold, in general, if we remove the assumption that each $E(\lambda)$ is self-adjoint, so that the corresponding spectral integrals may produce different operators and these operators are the main object of this paper.
The main result of the paper
consists in showing that a closed operator $B$ can be expressed as
 $$ B=\int_{\mb R} \lambda dX(\lambda)$$ if, and only if it is similar to a self-adjoint operator $A$; i.e. $B=TAT^{-1}$ where $A$ is self-adjoint and $T$ is a bounded operator with bounded inverse.

\section{Preliminaries}
In this section we collect some definitions and facts concerning {\em quasi-similarity} and {\em similarity} for possibly unbounded linear operators \cite{jpa_ct_pipandmetric}.

\medskip
Let $\H, \K$ be Hilbert spaces, $D(A)$ and $D(B)$ dense subspaces, %\footnote{{From now on we will always suppose the domains of the given operators to be dense in $\H$}}
 respectively of $\H$ and $\K$;   $A:D(A) \to \H$, $B: D(B) \to \K$ two linear operators.
 A bounded operator $T:\H \to \K$ is called an {\em intertwining} operator for $A$ and $B$ if
\begin{itemize}
\item[(i)] $T:D(A)\to D(B)$;
\item[(ii)] $BT\xi = TA\xi, \; \forall \xi \in D(A)$.
\end{itemize}

\bedefi
Let  $A$ and $B$  be two linear operators in the Hilbert spaces $\H$ and $\K$, respectively.

We say that $A$ and $B$ are {\em quasi-similar}, and write $A\dashv B$,
if there exists an intertwining operator $T$ for $A$ and $B$ which is invertible, with inverse $T^{-1}$ densely defined.

The operators $A$ and $B$ are said to be {\em similar}, and write $A\sim B$,  if they are quasi similar and the inverse $T^{-1}$ of the intertwining operator $T$ intertwines $B$ and $A$.
\findefi

\berem We notice that  $\sim$ is an equivalence relation. Moroever, if $A\sim B$, then $TD(A)=D(B)$.  \end{remark}

If $A\dashv B$ (respectively, $A\sim B$) with intertwining operator $T$, then, $B^*\dashv A^*$ (respectively, $B^*\sim A^*$), with intertwining operator $T^*$.

\berem Let $A$ and $B$ be linear operators in $\H$ and $\K$, respectively, with $A\sim B$. The following properties of similar operators are easily proved.
\begin{itemize}
\item[(i)] $A$ is closed if, and only if, $B$ is closed.
\item[(ii)] $A^{-1}$ exists if, and only if,  $B^{-1}$ exists. Moreover, $B^{-1} \sim A^{-1}$.
\end{itemize}
\enrem

\medskip

%\subsection{Similitarity and spectra}
If $A$ is a closed operator, we denote, as usual, by $\sigma(A)$ its spectrum. The parts in which the spectrum is traditionally decomposed, the point spectrum, the continuous spectrum and the residual spectrum, are denoted respectively by  $\sigma_p (A)$, $\sigma_c(A)$, $\sigma_r(A)$.

\begin{prop}\label{prop_2.4} Let $A$, $B$ be closed operators. Assume that $A\sim B$ and let $T$ be the corresponding intertwining operator. Then
the spectra $\sigma (A)$ and $\sigma(B)$ coincide and
$$\sigma_p (A)=\sigma_p(B), \quad \sigma_c (A)=\sigma_c(B) , \quad \sigma_r (A)=\sigma_r(B).$$
Moreover, if $\lambda \in \sigma_p (A)$, the multiplicity $m_A(\lambda)$ of $\lambda$ as eigenvalue of $A$ is the same of its multiplicity $m_B(\lambda)$
 as eigenvalue of $B$.
\end{prop}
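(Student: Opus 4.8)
The plan is to exploit that, for \emph{similar} operators, both the intertwining operator $T$ and its inverse $T^{-1}$ are everywhere-defined bounded bijections (any intertwining operator is bounded by definition, and in the similar case $T^{-1}$ is itself an intertwining operator for $B$ and $A$), and that $TD(A)=D(B)$, as already recorded in the remark above. The first step is to note that similarity is stable under the shift $\lambda\mapsto A-\lambda I$: for every $\lambda\in\CC$ one has $T(A-\lambda)\xi=TA\xi-\lambda T\xi=BT\xi-\lambda T\xi=(B-\lambda)T\xi$ for $\xi\in D(A)$ and, symmetrically, $T^{-1}(B-\lambda)\eta=(A-\lambda)T^{-1}\eta$ for $\eta\in D(B)$; hence $A-\lambda\sim B-\lambda$ with the same intertwining operator $T$. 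Every statement below is then a comparison of $A-\lambda$ with $B-\lambda$.

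For the resolvent sets, suppose $\lambda\in\rho(A)$ and put $S:=T(A-\lambda)^{-1}T^{-1}$, a bounded operator on $\K$. Using $T^{-1}(B-\lambda)\eta=(A-\lambda)T^{-1}\eta$ on $D(B)$ together with $S\eta=T(A-\lambda)^{-1}T^{-1}\eta\in TD(A)=D(B)$, a direct computation gives $S(B-\lambda)\eta=\eta$ for $\eta\in D(B)$ and $(B-\lambda)S\eta=\eta$ for $\eta\in\K$; hence $B-\lambda$ is a bijection of $D(B)$ onto $\K$ with bounded inverse $S$, i.e. $\lambda\in\rho(B)$. Interchanging the roles of $A$ and $B$ (and of $T$ and $T^{-1}$) gives the reverse inclusion, so $\rho(A)=\rho(B)$ and therefore $\sigma(A)=\sigma(B)$.

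For the fine structure I would track kernels and ranges through $T$. From $(B-\lambda)T\xi=T(A-\lambda)\xi$ and the injectivity of $T$ it follows that $T$ restricts to a bijection of $\ker(A-\lambda)$ onto $\ker(B-\lambda)$, with inverse the restriction of $T^{-1}$; hence $\dim\ker(A-\lambda)=\dim\ker(B-\lambda)$, which yields at once $\sigma_p(A)=\sigma_p(B)$ and $m_A(\lambda)=m_B(\lambda)$ for $\lambda\in\sigma_p(A)$. Next, since $TD(A)=D(B)$, every $\eta\in D(B)$ has the form $\eta=T\xi$ with $\xi\in D(A)$, so $\mathrm{Ran}(B-\lambda)=\{(B-\lambda)T\xi:\xi\in D(A)\}=\{T(A-\lambda)\xi:\xi\in D(A)\}=T\,\mathrm{Ran}(A-\lambda)$. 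As $T$ is a homeomorphism of $\H$ onto $\K$, it carries dense subspaces to dense subspaces, so $\mathrm{Ran}(B-\lambda)$ is dense in $\K$ if and only if $\mathrm{Ran}(A-\lambda)$ is dense in $\H$. Combined with the kernel equality this gives $\sigma_r(A)=\sigma_r(B)$, and then $\sigma_c(A)=\sigma_c(B)$ by taking complements inside the common spectrum, the three parts being disjoint with union $\sigma(\cdot)$.

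The argument is largely bookkeeping, and the only delicate point — and precisely where one needs \emph{similarity} rather than mere quasi-similarity — is the everywhere-defined boundedness and global invertibility of $T^{-1}$: this is what makes $S=T(A-\lambda)^{-1}T^{-1}$ a bona fide bounded resolvent and what guarantees $TD(A)=D(B)$, hence the identity $\mathrm{Ran}(B-\lambda)=T\,\mathrm{Ran}(A-\lambda)$. I expect no other obstacle.
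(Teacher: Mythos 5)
Your proof is correct, and it is the standard argument: reduce to $A-\lambda\sim B-\lambda$, transport the resolvent via $S=T(A-\lambda)^{-1}T^{-1}$, and carry kernels and ranges through the homeomorphism $T$ using $TD(A)=D(B)$. The paper itself omits the proof (deferring to the cited reference), so there is nothing to contrast with; your write-up simply supplies the expected details, and I see no gap.
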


The situation for quasi-similarity is more involved and it has been described in \cite{jpa_ct_pipandmetric}. We summarize in the next proposition the main results.
\begin{prop} Let $A$, $B$ be closed operators. Assume that $A\dashv B$ with intertwining operator $T$.
Then the following statements hold.
\begin{itemize}
\item[{\sf (sp.1)}]$\sigma_p(A)\subseteq \sigma_p(B)$ and for every $\lambda \in \sigma_p(A)$ one has $m_A(\lambda) \leq m_B(\lambda)$, where $m_C(\lambda)$
 denotes the multiplicity $\lambda$ as eigenvalue of the operator $C$.
\item[{\sf (sp.2)}]If  $T^{-1}$ is bounded and  $T(D(A))$ is a core for $B$, then $\sigma_p(B)\subseteq \sigma(A)$
\item[{\sf (sp.3)}] If $T^{-1}$ is everywhere defined and bounded and $TD(A)$ is a core for $B$. Then
$$ \sigma_p (A)\subseteq \sigma_p (B) \subseteq \sigma (B) \subseteq \sigma(A).$$
\end{itemize}
\end{prop}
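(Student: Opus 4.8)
The plan is to handle (sp.1), (sp.2) and (sp.3) in turn, deriving everything from the single intertwining identity $BT\xi=TA\xi$ ($\xi\in D(A)$) and then invoking the core hypotheses to carry identities that hold on $T(D(A))$ over to the whole of $D(B)$. For (sp.1) I would begin with an eigenvector: if $A\xi=\lambda\xi$ and $\xi\neq 0$, then $BT\xi=TA\xi=\lambda T\xi$, and $T\xi\neq 0$ since $T$ is injective (being invertible); hence $\lambda\in\sigma_p(B)$. The same computation shows that $T$ restricts to an injective linear map of $\ker(A-\lambda)$ into $\ker(B-\lambda)$, so $m_A(\lambda)=\dim\ker(A-\lambda)\le\dim\ker(B-\lambda)=m_B(\lambda)$.

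For (sp.2) I would argue by contraposition. Let $\lambda\notin\sigma(A)$, so that $A-\lambda$ is a bijection of $D(A)$ onto $\H$ with bounded inverse, and let $\eta\in D(B)$ satisfy $(B-\lambda)\eta=0$. Since $T(D(A))$ is a core for $B$, choose $\xi_n\in D(A)$ with $T\xi_n\to\eta$ and $BT\xi_n\to B\eta=\lambda\eta$; then $T(A-\lambda)\xi_n=(B-\lambda)T\xi_n\to 0$. The vectors $T(A-\lambda)\xi_n$ lie in the range of $T$, so $T^{-1}$ may be applied and, being bounded, yields $(A-\lambda)\xi_n\to 0$; applying the bounded operator $(A-\lambda)^{-1}$ gives $\xi_n\to 0$, whence $\eta=\lim T\xi_n=0$. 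Thus $B-\lambda$ is injective, i.e. $\lambda\notin\sigma_p(B)$, which is (sp.2).

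For (sp.3) the inclusions $\sigma_p(A)\subseteq\sigma_p(B)$ and $\sigma_p(B)\subseteq\sigma(B)$ are (sp.1) and the trivial one, so the content is $\sigma(B)\subseteq\sigma(A)$; equivalently, $\lambda\notin\sigma(A)$ implies $B-\lambda$ bijective with bounded inverse. Injectivity is already (sp.2). For surjectivity and the resolvent estimate I would introduce the bounded, everywhere-defined operator $Q:=T(A-\lambda)^{-1}T^{-1}$ and check by direct substitution that $Q(B-\lambda)T\xi=T\xi$ for all $\xi\in D(A)$ and, for every $\zeta$, that $Q\zeta\in T(D(A))\subseteq D(B)$ with $(B-\lambda)Q\zeta=\zeta$. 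The second identity gives surjectivity; together with injectivity it shows $B-\lambda$ is a bijection whose inverse necessarily coincides with the bounded operator $Q$, so $\lambda$ lies in the resolvent set of $B$.

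The one genuinely delicate point — and the step I expect to require the most care — is the transfer of the algebraic identities from the subspace $T(D(A))$ to all of $D(B)$: this is exactly where the coreness of $T(D(A))$, the closedness of $B$ and the boundedness of $T^{-1}$ enter simultaneously, and it is also the reason why these inclusions are not available for arbitrary quasi-similar pairs but only under the extra hypotheses stated.
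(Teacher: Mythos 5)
Your proof is correct and complete; note that the paper itself gives no proof of this proposition, merely citing Antoine--Trapani \cite{jpa_ct_pipandmetric}, and your argument is the standard one: $T$ injects eigenspaces for (sp.1), the core property plus boundedness of $T^{-1}$ and of $(A-\lambda)^{-1}$ force any $\eta\in\ker(B-\lambda)$ to vanish for (sp.2), and the explicit right inverse $Q=T(A-\lambda)^{-1}T^{-1}$ supplies surjectivity for (sp.3). One small remark: your closing paragraph slightly misplaces the delicate step --- the coreness of $T(D(A))$ is needed only for the injectivity argument in (sp.2), whereas the identity $(B-\lambda)Q\zeta=\zeta$ in (sp.3) holds pointwise on $T(D(A))$ with no limiting procedure required.
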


\berem Suppose, for instance, that $A$ is self-adjoint, then any operator $B$ which is quasi-similar to $A$ by means on an intertwining operator $T$ whose inverse is bounded too, has real spectrum and, if $A$ has a pure point spectrum, then $B$ is isospectral to $A$.
\enrem

\section{Non-self-adjoint resolutions of the identity}
To begin with, we fix some terminology.
Let $\H$ be a Hilbert space.
A bounded operator $X$ will be called a projection if $X^2=X$ and a self-adjoint (or orthogonal) projection if $X=X^2=X^*$.
If $X$ is a nonzero projection, then $\|X\|\geq 1$, while if it is self-adjoint $\|X\|=1$.

\beex \label{ex_main}Let us consider two biorthogonal Schauder bases $\Phi=\{\varphi_n, \,n\in {\mb N}\}$ and $\Psi=\{\psi_n, \,n\in {\mb N}\}$ of the Hilbert space $\H$, $ \ip{\varphi_i}{\psi_j}=\delta_{i,j}$ and let us consider
an operator of the form
\begin{equation} \label{X_form}S=\sum_{k=1}^\infty \alpha_k (\psi_k\otimes \overline{ \varphi_k})\end{equation}
with $\alpha_k \in {\mb C}$, $k\in {\mb N}$.
The domain of $S$ is the following subspace of $\H$:
$$D(S)= \left\{ \xi \in \H: \lim_{n\to \infty} \left\| \sum_{k=n+1}^{n+p} \alpha_k \ip{\xi}{\varphi_k}\psi_k \right\|=0, \forall p\in {\mb N}  \right\}.$$
This domain is dense, since it contains the vectors $\psi_k$, $k \in {\mb N}$.

It is easy to see that every $\alpha_k$ is an eigenvalue of $S$ with eigenvector $\varphi_k$. The spectrum $\sigma(S)$ of $S$ is the set $\overline{\{\alpha_k,\, k\in {\mb N}\}}$. In particular, $\sigma_p(S)=\{\alpha_k,\, k\in {\mb N}\}$ and every limit point of $\sigma_p(S)$, if any, lies in the continuous spectrum $\sigma_c(S)$ of $S$.

To simplify notations, we put $R_k=\psi_k\otimes \overline{ \varphi_k}$. This family of rank one operators enjoys the following easy properties:
\begin{itemize}
\item[(i)] $\|R_k\|\leq \|\varphi_k\|\, \|\psi_k\|$;
\item[(ii)] $R_k^*=L_k:= \varphi_k \otimes \overline{\psi_k}$;
\item[(iii)] $R_k^2 =R_k$ and $R_k R_m=0$ if $m\neq k$.
\end{itemize}
In particular, (iii) implies that $R_k$ is a non-self-adjoint projection (unless $\varphi_k=\psi_k$). Moreover, since $\Psi=\{\psi_n, \,n\in {\mb N}\}$ is a Schauder basis, one gets
$$ \xi= \sum_{k=1}^\infty R_k \xi, \quad \forall \xi \in \H.$$

Thus, the family $\{R_k\}$ enjoys the property
$$ \sup_{n\in {\mb N}} \left\|\sum_{k=1}^nR_k\right\|<\infty.$$

Let us now assume that the spectrum  $\sigma(S)$ of $S$ is real. Then, we can define, for $\lambda \in {\mb R}$ and $\xi \in \H$,
$$ X(\lambda) \xi = \sum_{k\leq \lambda} R_k\xi . $$
Then we can formally write
$$ S \xi= \int_{\mb R} \lambda dX(\lambda) \xi .$$

 Let us suppose that $\{\varphi_k\}$ and $\{\psi_k\}$ are biorthogonal Riesz bases. This means that there exists a symmetric bounded operator $G$ with bounded inverse $G^{-1}$ and an orthonormal basis $\{\chi_n\}$ such that $\varphi_k = G^{-1}\chi_k$ and $\psi_k=G\chi_k$, for every $k \in {\mb N}$.
Then, we get
$$(\psi_k \otimes \overline{\vp_k})\xi = \ip{\xi}{\vp_k}\psi_k =  \ip{\xi}{G^{-1}\chi_k}G\chi_k = \ip{G^{-1}\xi}{\chi_k}G\chi_k.$$
Hence
$\psi_k \otimes \overline{\vp_k}= G(\chi_k \otimes \overline{\chi_k})G^{-1}$.

Then, it is easily seen that the family of operators $\{X(\lambda)\}_{\lambda\in {\mb R}}$ enjoys the properties ({\sf qs$_1$})-({\sf qs$_4$}) listed in Definition \ref{resofid} below.

We remark that in finite dimensional spaces every family of projections whose sum is the identity operator is similar to a family of orthogonal projections; so that the situation discussed above is the more general possible. For the infinite dimensional case, an analogous statement was obtained by Mackey \cite[Theorem 55]{mackey}: every non-self-adjoint resolution of the identity is similar to a self-adjoint resolution of the identity (Mackey's terminology is different: a non-self-adjoint resolution of the identity is a countably additive spectral measure on the Borel sets of the plane or of the real line); the resolution of the identity $\{X(\lambda)\}$ of the next Definition \ref{resofid} need not define a countably additive spectral measure on the Borel sets.

\enex

\bedefi \label{resofid} Let $\H$ be a Hilbert space. A {\em resolution of the identity} of $\H$ on the interval $I:=[\alpha, \beta]$, $(-\infty\leq \alpha < \beta\leq +\infty)$ is a one parameter family of (non necessarily self-adjoint) bounded operators $\{X(\lambda)\}_{\lambda \in I}$ satisfying
the following conditions
\begin{itemize}
\item[(\sf qs$_1$)] $\displaystyle \sup_{\lambda \in I} \|X(\lambda)\|:=\gamma(X)<+\infty$;
\item[(\sf qs$_2$)] $X(\lambda)X(\mu)=X(\mu)X(\lambda)= X(\lambda)$ if $\lambda < \mu$;
\item[(\sf qs$_3$)] $\displaystyle \lim_{\lambda \to \alpha}X(\lambda)\xi = 0; \quad \lim_{\lambda \to \beta}X(\lambda)\xi = \xi, \quad \forall\xi \in \H$;
\item[(\sf qs$_4$)] $\displaystyle \lim_{\epsilon \to 0^+} X(\lambda +\epsilon)\xi= X(\lambda)\xi, \; \forall \lambda \in I;\; \forall\xi \in \H$.
\end{itemize}
If the limits in ({\sf qs$_3$}) and ({\sf qs$_4$}) hold with respect to the weak topology only, then we say that $\{X(\lambda)\}_{\lambda \in I}$ is a {\em weak resolution of the identity}.

If $X(\lambda)^*=X(\lambda)$, for every $\lambda \in I$, we say that the resolution of the identity is {\em self-adjoint}.
\findefi
\berem  Since the $X(\lambda)$'s are projections, $\|X(\lambda)\|\geq 1$, for every $\lambda \in I$. Hence $\gamma(X)\geq 1$. \enrem 
\begin{prop} If $\{X(\lambda)\}_{\lambda \in {\mb R}}$ is a resolution of the identity, $\{X(\lambda)^*\}_{\lambda \in {\mb R}}$ is a weak resolution of the identity.\end{prop}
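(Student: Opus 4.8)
The plan is to set $Y(\lambda):=X(\lambda)^*$ for $\lambda\in{\mb R}$ and to verify conditions ({\sf qs}$_1$)--({\sf qs}$_4$) of Definition \ref{resofid} for the family $\{Y(\lambda)\}$, with the understanding that in ({\sf qs}$_3$) and ({\sf qs}$_4$) only weak convergence is to be established. The two purely algebraic conditions are immediate: since $\|X(\lambda)^*\|=\|X(\lambda)\|$ one gets $\sup_{\lambda}\|Y(\lambda)\|=\gamma(X)<+\infty$, which is ({\sf qs}$_1$); and taking Hilbert-space adjoints in the relations $X(\mu)X(\lambda)=X(\lambda)X(\mu)=X(\lambda)$, valid for $\lambda<\mu$ by ({\sf qs}$_2$), yields at once $Y(\lambda)Y(\mu)=Y(\mu)Y(\lambda)=Y(\lambda)$, which is ({\sf qs}$_2$) (in fact in its full, non-weak form).

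For the limit conditions the key observation is the pairing identity
\[
\ip{Y(\lambda)\xi}{\eta}=\ip{X(\lambda)^*\xi}{\eta}=\ip{\xi}{X(\lambda)\eta},\qquad \xi,\eta\in\H .
\]
Fix $\xi,\eta\in\H$. By ({\sf qs}$_3$) for $\{X(\lambda)\}$, $X(\lambda)\eta\to 0$ strongly as $\lambda\to-\infty$ and $X(\lambda)\eta\to\eta$ strongly as $\lambda\to+\infty$; by continuity of the inner product, $\ip{\xi}{X(\lambda)\eta}\to 0$ and $\ip{\xi}{X(\lambda)\eta}\to\ip{\xi}{\eta}$ respectively. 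Hence $\ip{Y(\lambda)\xi}{\eta}\to 0$ and $\ip{Y(\lambda)\xi}{\eta}\to\ip{\xi}{\eta}$ for every $\eta\in\H$, i.e. $Y(\lambda)\xi$ tends weakly to $0$, respectively to $\xi$; this is ({\sf qs}$_3$) in weak form. The same argument, applied with ({\sf qs}$_4$) for $\{X(\lambda)\}$ (right-continuity of $\lambda\mapsto X(\lambda)\eta$), gives $\ip{Y(\lambda+\epsilon)\xi}{\eta}=\ip{\xi}{X(\lambda+\epsilon)\eta}\to\ip{\xi}{X(\lambda)\eta}=\ip{Y(\lambda)\xi}{\eta}$ as $\epsilon\to 0^+$, which is ({\sf qs}$_4$) in weak form, and completes the verification.

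There is no genuine obstacle here; the only point worth emphasizing is why one cannot expect more than a \emph{weak} resolution of the identity: the adjoint operation is continuous for the weak operator topology but not for the strong one, so the strong limits in ({\sf qs}$_3$)--({\sf qs}$_4$) are not inherited by $\{X(\lambda)^*\}$ in general. Whether they nonetheless hold strongly is precisely what singles out the *-resolutions of the identity treated in the sequel, and should not be addressed at this stage. (Note also that the uniform bound $\gamma(X)$ is used only to record ({\sf qs}$_1$) and plays no role in the limit arguments.)
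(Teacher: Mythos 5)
Your proof is correct and is exactly the argument the authors have in mind (the paper states this proposition without proof, as immediate): adjoints preserve ({\sf qs}$_1$) and ({\sf qs}$_2$), and the identity $\ip{X(\lambda)^*\xi}{\eta}=\ip{\xi}{X(\lambda)\eta}$ converts the strong limits in ({\sf qs}$_3$)--({\sf qs}$_4$) into weak ones. Your closing remark correctly pinpoints why only weak convergence can be expected, which is precisely the motivation for the paper's subsequent notion of a *-resolution of the identity.
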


\berem From ({\sf qs$_2$}) and ({\sf qs$_4$}) it follows that $X(\lambda)^2=X(\lambda)$,  for every $\lambda \in {\mb R}$. Thus every $X(\lambda)$ is a projection, but not an {\em orthogonal} projection, in general. If also $X(\lambda)^*=X(\lambda)$, for every $\lambda \in {\mb R}$, then $\{X(\lambda)\}_{\lambda \in {\mb R}}$ is a spectral family in the usual sense. In this case, weak limits, automatically become strong limits so every weak self-adjoint resolution of the identity is a resolution of the identity.
\enrem
\berem If $\{X(\lambda)\}_{\lambda \in {\mb R}}$ is a self-adjoint resolution of the identity, then condition {(\sf qs$_2$)} can be replaced with the following equivalent one:
\begin{itemize}
\item[{(\sf qs$_2$')}] $X(\lambda)\leq X(\mu)$, $\lambda, \mu \in {I}, \,\lambda <\mu$.
\end{itemize}
%Moreover, in this case, the limits appearing in the previous definition can be taken in weak or in strong sense: for self-adjoint projection operators, indeed, these two notion of convergence coincide.
\enrem

\bedefi If both  $\{X(\lambda)\}_{\lambda \in {I}}$ and $\{X(\lambda)^*\}_{\lambda \in {I}}$ are resolutions of the identity, we simply say that $\{X(\lambda)\}_{\lambda \in {I}}$ is a {\em *-resolution of the identity}.
\findefi

For reader's convenience, we recall the definition of generalized resolution of the identity due to Naimark, \cite[Appendix]{riesz}, \cite[Vol.II, Appendix]{Akhiezer}.

\bedefi \label{def_naimark} A {\em generalized resolution of the identity} is a one parameter family of bounded symmetric  operators $\{B(\lambda)\}_{\lambda \in I}$, where $I:=[\alpha, \beta]$ is a bounded or unbounded interval of the real line, satisfying
the following conditions
\begin{itemize}
\item[\sf(gri$_1$)] $\lim_{\lambda\to \alpha}B(\lambda)\xi=0$, \; $\lim_{\lambda\to \beta}B(\lambda)\xi=\xi$, $\;\forall \xi \in \H$;
\item[\sf(gri$_2$)] $\lim_{\epsilon \to 0^+} B(\lambda +\epsilon)\xi= B(\lambda)\xi, \; \forall \lambda \in {I}$, $\;\forall \xi \in \H$;
\item[\sf(gri$_3$)] $B(\lambda) \leq B(\mu)$, if $\lambda, \mu \in I$, $\lambda<\mu$.
\end{itemize}
\findefi

\berem
The operators of a generalized resolution of the identity are self-adjoint but not necessarily idempotent, while in Definition \ref{resofid} we require exactly the opposite. \enrem

\beex \label{ex_310}Let $\{E(\lambda)\}_{\lambda \in {\mb R}}$ be a self-adjoint resolution of the identity and $G$ a bounded symmetric operator with bounded inverse, with $G^2\neq I$. Put $X(\lambda):= GE(\lambda)G^{-1}$,  for every $\lambda \in {\mb R}$. Then $\{X(\lambda)\}_{\lambda \in {\mb R}}$ is a resolution of the identity in the sense of Definition \ref{resofid}. In particular, $1\leq \|X(\lambda)\|\leq \|G\|\|G^{-1}\|$, for every $\lambda \in {\mb R}$. Let us prove, for instance, the second equality in ({\sf qs$_3$}). We have,
\begin{align*} &\|GE(\lambda)G^{-1}\xi -\xi\|= \|GE(\lambda)G^{-1}\xi -GG^{-1}\xi\|\\
&\leq \|G\| E(\lambda)G^{-1}\xi- G^{-1}\xi\|\to 0\; \mbox{as} \; \lambda \to +\infty.\end{align*}
It is easily seen that in this case $\{X(\lambda)^*\}_{\lambda \in {\mb R}}$ is a (non-self-adjoint) resolution of the identity too.

If $G^2=I$ then every $X(\lambda)$ is symmetric and so it is a self-adjoint resolution of the identity.
\enex

\medskip
From now on, we will only consider the case $$\gamma(X)= \sup_{\lambda \in I} \|X(\lambda)\| =1.$$
This assumption does not imply that the $X(\lambda)$'s are self-adjoint projections. For instance, in the case considered in Example \ref{ex_310}, one can easily find examples of operators $G$ satisfying  $\|G\|\|G^{-1}\|=1$.

\begin{lemma}\label{lemma_310}Let $\{X(\lambda)\}_{\lambda \in {\mb R}}$ be a resolution of the identity. If $\lambda <\mu$ then \begin{equation}\label{ineq_1}\|X(\lambda)\xi\|\leq  \|X(\mu)\xi\|, \quad \forall\xi \in \H.\end{equation} Hence, the nonnegative valued function $\lambda \mapsto \|X(\lambda)\xi\|$ is increasing, for every $\xi\in \H$. \end{lemma}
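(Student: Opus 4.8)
The plan is to use the assumption $\gamma(X)=1$ together with the idempotency of $X(\mu)$. The key algebraic fact is {(\sf qs$_2$)}: since $\lambda<\mu$ we have $X(\mu)X(\lambda)=X(\lambda)$. Therefore, for any $\xi\in\H$,
$$ X(\lambda)\xi = X(\mu)\bigl(X(\lambda)\xi\bigr),$$
so that $X(\lambda)\xi$ lies in the range of $X(\mu)$ applied to $X(\lambda)\xi$. Taking norms and using $\|X(\mu)\|\le\gamma(X)=1$ gives
$$ \|X(\lambda)\xi\| = \|X(\mu)X(\lambda)\xi\| \le \|X(\mu)\|\,\|X(\lambda)\xi\|,$$
which by itself is just $\|X(\lambda)\xi\|\le\|X(\lambda)\xi\|$ and says nothing. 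So a one-line contraction estimate is not enough; I need to feed $X(\mu)\xi$ (not $X(\lambda)\xi$) into the contraction.

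The correct route is to write $X(\lambda)\xi = X(\lambda)\bigl(X(\mu)\xi\bigr)$, again by {(\sf qs$_2$)} in the form $X(\lambda)X(\mu)=X(\lambda)$. Then
$$ \|X(\lambda)\xi\| = \|X(\lambda)X(\mu)\xi\| \le \|X(\lambda)\|\,\|X(\mu)\xi\| \le \gamma(X)\,\|X(\mu)\xi\| = \|X(\mu)\xi\|,$$
using $\gamma(X)=1$ in the last step. This is exactly \eqref{ineq_1}. The monotonicity of $\lambda\mapsto\|X(\lambda)\xi\|$ on all of $\mb R$ is then immediate: given any $\lambda<\mu$ the displayed inequality applies verbatim, and nonnegativity is clear since it is a norm.

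The only subtlety — and it is a very mild one — is making sure the hypothesis $\gamma(X)=1$ is genuinely in force; this was fixed in the paragraph preceding the lemma ("From now on, we will only consider the case $\gamma(X)=1$"), so no self-adjointness of the $X(\lambda)$ is needed, only uniform boundedness by $1$ and the commutation/absorption relation {(\sf qs$_2$)}. I expect no real obstacle here: the whole proof is the single chain of inequalities above, and the "main difficulty" is merely the bookkeeping choice of writing $X(\lambda)=X(\lambda)X(\mu)$ rather than $X(\lambda)=X(\mu)X(\lambda)$, i.e. inserting the contraction $X(\lambda)$ on the outside acting on the larger projection $X(\mu)\xi$.
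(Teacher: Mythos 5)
Your proof is correct and is exactly the paper's argument: use {(\sf qs$_2$)} in the form $X(\lambda)=X(\lambda)X(\mu)$ and then the bound $\|X(\lambda)\|\leq\gamma(X)=1$. Your side remark about why the other factorization $X(\lambda)=X(\mu)X(\lambda)$ yields nothing is a fair observation but not needed.
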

\begin{proof}
Indeed, if $\lambda\leq \mu$,
$$ \|X(\lambda)\xi\|= \|X(\lambda)X(\mu)\xi\|\leq \|X(\lambda)\| \, \|X(\mu)\xi\|\leq \gamma(X)\|X(\mu)\xi\|=\|X(\mu)\xi\|.$$
\end{proof}

\subsection{Operators associated to a resolution of the identity}

A resolution of the identity $\{X(\lambda)\}$ defines
an operator valued function $\lambda \mapsto F(\lambda)$,  $\lambda \in {\mb R}$, where $F(\lambda)$ is the positive operator
\begin{equation}\label{familyF} F(\lambda)=X(\lambda)^* X(\lambda), \quad \lambda \in {\mb R}.\end{equation}
Of course, $\ip{F(\lambda)\xi}{\xi}=\|X(\lambda)\xi \|^2$ for every $\lambda \in {\mb R}$, $\xi \in \H$.

\begin{lemma}\label{lemma_sfunction} Let $\{X(\lambda)\}$ be a  *-resolution of the identity. Then, the operator valued function $\lambda \to F(\lambda)$ has the following properties:
\begin{itemize}
\item[(\sf fs$_1$)] $\displaystyle \sup_{\lambda \in {\mb R}} \|F(\lambda)\|={1}$;
\item[(\sf fs$_2$)] $F(\lambda)\leq F(\mu)$ if $\lambda < \mu$;
\item[(\sf fs$_3$)] $\displaystyle \lim_{\lambda \to -\infty}F(\lambda)\xi = 0; \quad \lim_{\lambda \to +\infty}F(\lambda)\xi = \xi, \quad \forall \xi \in \H$;
\item[(\sf fs$_4$)] $\displaystyle \lim_{\epsilon \to 0^+} F(\lambda +\epsilon)\xi= F(\lambda)\xi, \; \forall \lambda \in {\mb R};\; \forall\xi \in \H$.
\end{itemize}
Hence, $\{F(\lambda)\}_{\lambda \in {\mb R}}$ is a generalized resolution of the identity in the sense of Definition \ref{def_naimark}.
\end{lemma}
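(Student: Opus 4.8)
The plan is to verify the four conditions (fs$_1$)--(fs$_4$) in turn and then to observe that, together with the trivial fact that each $F(\lambda)=X(\lambda)^*X(\lambda)$ is a positive, hence bounded symmetric, operator, they are exactly the requirements (gri$_1$)--(gri$_3$) of Definition \ref{def_naimark}. The one structural ingredient that is really needed --- and only for (fs$_3$) and (fs$_4$) --- is the standing hypothesis that $\{X(\lambda)\}$ is a \emph{*-resolution} of the identity, so that $\{X(\lambda)^*\}$ is again a resolution of the identity; this is what lets us control the \emph{left} factor $X(\lambda)^*$ in the product $F(\lambda)$, whereas in general one only knows $\{X(\lambda)^*\}$ to be a \emph{weak} resolution of the identity, which is not enough to pass limits through the product.

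Condition (fs$_1$) follows from the standing normalization $\gamma(X)=1$: one has $\|F(\lambda)\|=\|X(\lambda)^*X(\lambda)\|=\|X(\lambda)\|^2\le 1$ for every $\lambda$, while, since $\ip{F(\lambda)\xi}{\xi}=\|X(\lambda)\xi\|^2$ increases (Lemma \ref{lemma_310}) to $\|\xi\|^2$ as $\lambda\to+\infty$ by (qs$_3$), the supremum of $\|F(\lambda)\|$ over $\lambda$ is at least $1$; hence it equals $1$. Condition (fs$_2$) is merely a restatement of Lemma \ref{lemma_310}: for $\lambda<\mu$ one has $\ip{F(\lambda)\xi}{\xi}=\|X(\lambda)\xi\|^2\le\|X(\mu)\xi\|^2=\ip{F(\mu)\xi}{\xi}$ for all $\xi$, i.e.\ $F(\lambda)\le F(\mu)$.

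Conditions (fs$_3$) and (fs$_4$) I would handle with one elementary ``add and subtract'' identity. Fix $\xi\in\H$; for $\lambda<\mu$,
\[F(\mu)\xi-F(\lambda)\xi=X(\mu)^*\bigl(X(\mu)\xi-X(\lambda)\xi\bigr)+\bigl(X(\mu)^*-X(\lambda)^*\bigr)X(\lambda)\xi .\]
Since $\|X(\mu)^*\|=\|X(\mu)\|\le 1$, the first term has norm at most $\|X(\mu)\xi-X(\lambda)\xi\|$, and the second is the image under $X(\mu)^*-X(\lambda)^*$ of the \emph{fixed} vector $X(\lambda)\xi$. The first limit in (fs$_3$) is even more direct, as $\|F(\mu)\xi\|\le\|X(\mu)\xi\|\to 0$ when $\mu\to-\infty$ by (qs$_3$). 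For the second limit in (fs$_3$) use the variant $F(\mu)\xi-\xi=X(\mu)^*(X(\mu)\xi-\xi)+(X(\mu)^*\xi-\xi)$ and apply (qs$_3$) to $\{X(\lambda)\}$ on the first summand and to $\{X(\lambda)^*\}$ on the second. For (fs$_4$), put $\mu=\lambda+\epsilon$ in the displayed identity and let $\epsilon\to 0^+$, applying (qs$_4$) to $\{X(\lambda)\}$ in the first term and to $\{X(\lambda)^*\}$ (at the fixed vector $X(\lambda)\xi$) in the second. This gives (fs$_1$)--(fs$_4$); since each $F(\lambda)$ is self-adjoint, the identifications (gri$_1$)=(fs$_3$), (gri$_2$)=(fs$_4$), (gri$_3$)=(fs$_2$) then show that $\{F(\lambda)\}_{\lambda\in{\mb R}}$ is a generalized resolution of the identity.

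The only obstacle is conceptual rather than computational: one must remember to invoke the *-resolution hypothesis in order to have $X(\mu)^*\xi\to\xi$ in norm (in general the adjoints form only a weak resolution of the identity), since the product $X(\lambda)^*X(\lambda)$ cannot be controlled from information on $X(\lambda)$ alone. Once that is in place, every step is a one- or two-line estimate resting on the uniform bound $\|X(\lambda)\|=\|X(\lambda)^*\|\le 1$.
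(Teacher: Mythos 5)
Your proof is correct and follows essentially the same route as the paper's: the C*-identity for ({\sf fs}$_1$), Lemma \ref{lemma_310} for ({\sf fs}$_2$), and add-and-subtract decompositions controlled by $\|X(\lambda)\|\le 1$ together with the strong convergence of $X(\lambda)^*\xi$ (which is exactly where the *-resolution hypothesis enters) for ({\sf fs}$_3$) and ({\sf fs}$_4$). The only cosmetic difference is that your telescoping for the second limit in ({\sf fs}$_3$) uses two terms where the paper uses three; the ({\sf fs}$_4$) decomposition is identical to the paper's.
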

\begin{proof} ({\sf fs}$_1$) is an easy consequence of the C*-property.

({\sf fs}$_2$): Using Lemma \ref{lemma_310}  we have $$\ip{F(\lambda)\xi}{\xi}=\|X(\lambda)\xi \|^2 \leq \|X(\mu)\xi\|^2 = \ip{F(\mu)\xi}{\xi}.$$

({\sf fs}$_3$) follows from the inequalities
$$ 0\leq \|F(\lambda)\xi\| = \|X(\lambda)^* X(\lambda)\xi\|\leq \|X(\lambda)^*\| \|X(\lambda)\xi\|\to 0 \; \mbox{as} \; \lambda \to -\infty$$
and
\begin{align*}
&\|F(\lambda)\xi- \xi\|= \|X(\lambda)^* X(\lambda)\xi - \xi\| \\
&=  \|X(\lambda)^* X(\lambda)\xi -X(\lambda)^*\xi + X(\lambda)^*\xi - X(\lambda)\xi + X(\lambda)\xi-\xi\| \\
&\leq \|X(\lambda)^*\| \|X(\lambda)\xi -\xi\|+ \|X(\lambda)^*\xi-X(\lambda)\xi\| + \|X(\lambda)\xi -\xi\| \to 0 \,\mbox{as} \, \lambda \to +\infty
\end{align*}
since, by the assumption $\lim_{\lambda \to +\infty}X(\lambda)^*\xi=\lim_{\lambda \to +\infty}X(\lambda)\xi = \xi,$ for every $\xi \in \H$.

({\sf fs}$_4$): In similar way, since
\begin{align*}
&\|F(\lambda +\epsilon)\xi - F(\lambda)\xi\| =\|X (\lambda +\epsilon)^*X(\lambda+\epsilon) \xi - X(\lambda)^*X(\lambda)\xi\|\\
&\leq \|X (\lambda +\epsilon)\|\|X (\lambda +\epsilon) - X(\lambda)\xi\| +\|X (\lambda +\epsilon)^*X(\lambda) \xi - X(\lambda)^*X(\lambda)\xi\|,
\end{align*}
both terms go to $0$ by the assumption as $\epsilon \to 0^+$.
\end{proof}

So under the assumptions of Lemma \ref{lemma_sfunction} one can define, in standard fashion, a positive operator valued measure on the Borel sets of the line: one begins with considering
a bounded interval of the form
$\Delta=]\lambda, \mu]$ and defines
$$F(\Delta)= F(\mu)- F(\lambda).$$
the measure of the closed interval $[\lambda, \mu]$ is then defined by
$F([\lambda, \mu])= F(\{\lambda\})+F(\Delta)$,  the measure of the singleton $\{\lambda\}$ being defined as
$$F(\{\lambda\})= \lim_{\epsilon\to 0^+} F(]\lambda-\epsilon,
\lambda])$$
and then one extends the measure $F(\cdot)$ to arbitrary Borel sets.
\begin{lemma} \label{lemma_f} The following properties hold:
\begin{itemize}
\item[({\sf f}$_1$)]$ X(\mu)^*F(\lambda) X(\mu)= F(\lambda)$, if $\lambda\leq \mu$;
\item[({\sf f}$_2$)]$F(\lambda)=X(\mu)^*F(\lambda),\quad F(\lambda)=F(\lambda)X(\mu)$, if $\lambda\leq \mu$;
\item[({\sf f}$_2$)] If $a<b$,
\begin{align*}(X(b)^* - X(a)^*) &F(\lambda) (X(b)-X(a)) \\&=\left\{ \begin{array}{ll} 0 & \mbox{ if } \lambda <a< b\\
(X(\lambda)^* - X(a)^*) (X(\lambda)-X(a))  & \mbox{ if }  a<\lambda <b \\
(X(b)^* - X(a)^*)(X(b)-X(a)) & \mbox{ if }  a<b<\lambda .
\end{array}    \right. \end{align*}

\end{itemize}
\end{lemma}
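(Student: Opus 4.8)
The plan is to reduce all three assertions to one composition rule for the projections $X(\lambda)$. From ({\sf qs$_2$}) together with the idempotency $X(\lambda)^2=X(\lambda)$ already established above one obtains, for \emph{arbitrary} $\lambda,\mu\in{\mb R}$, that $X(\lambda)X(\mu)=X(\mu)X(\lambda)=X(\min\{\lambda,\mu\})$; taking adjoints gives as well $X(\lambda)^*X(\mu)^*=X(\min\{\lambda,\mu\})^*$. Feeding these into $F(\lambda)=X(\lambda)^*X(\lambda)$ yields the ``master formula''
$$X(\nu)^*\,F(\lambda)\,X(\rho)=X(\min\{\nu,\lambda\})^*\,X(\min\{\lambda,\rho\}),\qquad \nu,\lambda,\rho\in{\mb R},$$
and after that everything is bookkeeping. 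The only point requiring a word of care is that for coincident parameters --- e.g.\ $\nu=\lambda$, or $\lambda=a$ or $\lambda=b$ in the last item --- one invokes idempotency rather than ({\sf qs$_2$}); note also that only the fact that $\{X(\lambda)\}$ itself is a resolution of the identity is used, not the full $*$-resolution hypothesis.

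Properties ({\sf f}$_1$) and ({\sf f}$_2$) are then immediate. For ({\sf f}$_1$), put $\nu=\rho=\mu$ with $\lambda\le\mu$: both minima equal $\lambda$, so $X(\mu)^*F(\lambda)X(\mu)=X(\lambda)^*X(\lambda)=F(\lambda)$. For ({\sf f}$_2$), the one-sided versions of the master formula give $X(\mu)^*F(\lambda)=X(\mu)^*X(\lambda)^*X(\lambda)=X(\lambda)^*X(\lambda)=F(\lambda)$ and $F(\lambda)X(\mu)=X(\lambda)^*X(\lambda)X(\mu)=X(\lambda)^*X(\lambda)=F(\lambda)$ whenever $\lambda\le\mu$.

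For the last item I would expand $(X(b)^*-X(a)^*)F(\lambda)(X(b)-X(a))$ into the four products $\pm\,X(x)^*F(\lambda)X(y)$ with $x,y\in\{a,b\}$, evaluate each by the master formula, and split into the three cases $\lambda<a<b$, $a<\lambda<b$, $a<b<\lambda$. When $\lambda<a<b$ all four minima collapse to $\lambda$, every term equals $F(\lambda)$, and the alternating sum is $0$. When $a<\lambda<b$ the four terms are $F(\lambda)$, $X(\lambda)^*X(a)$, $X(a)^*X(\lambda)$, $F(a)$, which recombine into $(X(\lambda)^*-X(a)^*)(X(\lambda)-X(a))$; when $a<b<\lambda$ they are $F(b)$, $X(b)^*X(a)$, $X(a)^*X(b)$, $F(a)$, recombining into $(X(b)^*-X(a)^*)(X(b)-X(a))$. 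I do not expect a genuine obstacle here: once the composition rule is recorded, the proof is purely computational, the only subtlety being the handling of coincident parameters noted above.
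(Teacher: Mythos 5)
Your proof is correct and is exactly the computation the paper has in mind (the paper merely asserts that these properties "come almost immediately from the definition and from ({\sf qs}$_1$)" --- evidently a slip for ({\sf qs}$_2$) --- without writing it out). Your master formula $X(\nu)^*F(\lambda)X(\rho)=X(\min\{\nu,\lambda\})^*X(\min\{\lambda,\rho\})$, derived from ({\sf qs}$_2$) plus idempotency, organizes the case-checking cleanly, and your observation that only the resolution-of-the-identity property of $\{X(\lambda)\}$ itself is needed (not the full *-resolution hypothesis) is accurate.
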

These properties come almost immediately from the definition and from ({\sf qs}$_1$).

\medskip
If $\{X(\lambda)\}$ is a *-resolution of the identity, then the family $\{F(\lambda)\}$ defined above is a generalized spectral family in the sense of Naimark \cite[Appendix]{riesz}. Hence, there exists a self-adjoint resolution of the identity $\{\mathbf{E}(\lambda)\}$ in a possibly larger Hilbert space $\bm{\H}$, containing $\H$ as a closed subspace, such that $F(\lambda)= \mathbf{PE}(\lambda)\upharpoonright \H$, where $\mathbf{P}$ is the projection of  $\bm{\H}$ onto $\H$ and by requiring that $\bm{\H}$ is spanned by the vectors of the form $\mathbf{E}(\lambda)\xi$, $\xi \in \H$, then $\bm{\H}$ is (essentially) unique.

Let $\bm{A}$ be the self-adjoint operator, with dense domain $\bm{PD(A)}$ in $\bm{\H}$ whose spectral family is $\{\mathbf{E}(\lambda)\}$. Then the operator $T_X = \bm{PA}\upharpoonright \H$ on the dense domain $D(T_X)=\bm{PD(A)}$ of $\H$ is closed and symmetric.

Then one easily proves that

\begin{thm}\label{thm_opertospectfam} Assume that $\{X(\lambda)\}$ is a *-resolution of the identity and let $F(\cdot)= X(\cdot)^*X(\cdot)$ be the positive operator valued function defined above.

Set $$D(T_X)=\left\{\xi \in \H:\, \int_\RN
\lambda ^2 \,d\ip{F(\lambda)\xi}{\xi} < \infty\right\}.$$ Then, $D(T_X)$ is dense in $\H$ and there exists a unique closed symmetric operator $T_X$, defined on $D(T_X)$  such that
$$ \ip{T_X\xi}{\eta} =\int_\RN \lambda  \,d\ip{F(\lambda)\xi}{\eta}, \quad \forall \xi \in D(T_X),\, \eta \in
\H .$$

\end{thm}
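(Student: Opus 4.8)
The plan is to build $T_X$ directly from the generalized resolution of the identity $\{F(\lambda)\}$ established in Lemma \ref{lemma_sfunction}, using the Naimark dilation already invoked in the paragraph preceding the statement. Concretely: since $\{F(\lambda)\}$ is a generalized spectral family in the sense of Naimark, there is a self-adjoint resolution of the identity $\{\mathbf{E}(\lambda)\}$ on a larger Hilbert space $\bm{\H}\supseteq\H$ with $F(\lambda)=\mathbf{P}\mathbf{E}(\lambda)\!\up\!\H$, $\mathbf{P}$ the orthogonal projection of $\bm{\H}$ onto $\H$, and $\bm{\H}$ minimal (spanned by the $\mathbf{E}(\lambda)\xi$, $\xi\in\H$). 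Let $\bm{A}=\int_\RN\lambda\,d\mathbf{E}(\lambda)$ be the associated self-adjoint operator on $\bm{\H}$ with domain $D(\bm{A})=\{\zeta\in\bm{\H}:\int_\RN\lambda^2\,d\ip{\mathbf{E}(\lambda)\zeta}{\zeta}<\infty\}$.

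First I would identify the two descriptions of the domain. For $\xi\in\H$ one has $\ip{\mathbf{E}(\lambda)\xi}{\xi}_{\bm{\H}}=\ip{\mathbf{P}\mathbf{E}(\lambda)\xi}{\xi}_{\H}=\ip{F(\lambda)\xi}{\xi}_{\H}$, so $\int_\RN\lambda^2\,d\ip{F(\lambda)\xi}{\xi}<\infty$ holds precisely when $\xi\in D(\bm{A})$, i.e. $D(T_X)=\H\cap D(\bm{A})$, which is exactly the $\mathbf{P}D(\bm{A})$-type domain mentioned in the text once one checks $\xi\in D(\bm{A})\Rightarrow\xi=\mathbf{P}\xi\in\mathbf{P}D(\bm{A})$ and, conversely, that vectors of $\H$ in $\mathbf{P}D(\bm A)$ lie in $D(\bm A)$. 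Density of $D(T_X)$ follows because the spectral projections $\mathbf{E}([-n,n])\xi$ approximate $\xi$ and lie in $D(\bm{A})$; to stay inside $\H$ one uses the minimality of $\bm\H$ together with the fact that $\H$ is invariant under the relevant truncations, or more simply notes that $F(\{n\})\to I$ strongly and $F([-n,n])\xi\in D(T_X)$ by (\textsf{f}$_2$) in Lemma \ref{lemma_f}, converging to $\xi$. Then I define $T_X$ on $D(T_X)$ by $\ip{T_X\xi}{\eta}=\int_\RN\lambda\,d\ip{F(\lambda)\xi}{\eta}$ for $\eta\in\H$; the Cauchy–Schwarz inequality in the variation sense, $\left|\int_\RN\lambda\,d\ip{F(\lambda)\xi}{\eta}\right|\le\left(\int_\RN\lambda^2\,d\ip{F(\lambda)\xi}{\xi}\right)^{1/2}\|\eta\|$, shows the right-hand side is a bounded conjugate-linear functional of $\eta$, so $T_X\xi$ is a well-defined element of $\H$, and uniqueness is immediate since the formula pins down all inner products $\ip{T_X\xi}{\eta}$.

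Next I would verify $T_X$ is symmetric: for $\xi,\eta\in D(T_X)$, approximating with bounded truncations $F([-n,n])$ one gets $\ip{T_X\xi}{\eta}=\lim_n\int_{-n}^n\lambda\,d\ip{F(\lambda)\xi}{\eta}$, and since each $F(\lambda)$ is self-adjoint the truncated integrals satisfy $\int_{-n}^n\lambda\,d\ip{F(\lambda)\xi}{\eta}=\overline{\int_{-n}^n\lambda\,d\ip{F(\lambda)\eta}{\xi}}$; passing to the limit gives $\ip{T_X\xi}{\eta}=\overline{\ip{T_X\eta}{\xi}}=\ip{\xi}{T_X\eta}$. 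For closedness, the cleanest route is to observe $T_X=\mathbf{P}\bm{A}\!\up\!\H$: if $\xi\in D(T_X)$ then $\bm A\xi$ makes sense and $\ip{\mathbf P\bm A\xi}{\eta}=\ip{\bm A\xi}{\eta}=\int\lambda\,d\ip{\mathbf E(\lambda)\xi}{\eta}=\int\lambda\,d\ip{F(\lambda)\xi}{\eta}=\ip{T_X\xi}{\eta}$ for $\eta\in\H$, so $T_X=\mathbf P\bm A\!\up\!\H$; then if $\xi_n\to\xi$ in $\H$ with $T_X\xi_n\to\zeta$ in $\H$, one shows $\sup_n\int\lambda^2 d\ip{F(\lambda)\xi_n}{\xi_n}<\infty$ via uniform boundedness applied to the functionals $\eta\mapsto\ip{T_X\xi_n}{\eta}$, whence $\xi\in D(\bm A)\cap\H=D(T_X)$ and $T_X\xi=\zeta$ by a Fatou/monotone-convergence argument on the spectral measures.

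The main obstacle I anticipate is the bookkeeping around the larger space $\bm\H$: making sure that ``$\xi\in\H$ and $\int\lambda^2 d\ip{F(\lambda)\xi}{\xi}<\infty$'' really is equivalent to ``$\xi\in D(\bm A)$'', not merely ``$\xi\in\mathbf P D(\bm A)$'', and that the truncations used to prove density and closedness keep vectors inside $\H$ — this uses the minimality clause of the Naimark construction and property (\textsf{f}$_2$) of Lemma \ref{lemma_f} in an essential way. If one prefers to avoid the dilation entirely, an alternative is to define $T_X$ by the sesquilinear-form recipe above from the start, prove density of $D(T_X)$ directly from (\textsf{fs}$_3$)–(\textsf{fs}$_4$) using $F([-n,n])\xi\to\xi$, and then get symmetry and closedness from the standard theory of operators defined by a positive operator-valued (Naimark) measure; the dilation is conceptually cleaner but the form-theoretic argument is more self-contained.
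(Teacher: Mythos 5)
Your overall route is the same as the paper's: invoke the Naimark dilation $F(\lambda)=\mathbf{P}\mathbf{E}(\lambda)\upharpoonright\H$ and realize $T_X$ as $\mathbf{P}\bm{A}\upharpoonright\H$ on $\H\cap D(\bm{A})$, getting symmetry and the defining sesquilinear identity from there. The genuine gap is in the density of $D(T_X)$, which is one of the two assertions of the theorem, and neither of the truncations you propose delivers it. The vectors $\mathbf{E}([-n,n])\xi$ do lie in $D(\bm{A})$ but in general leave $\H$: if $\H$ were invariant under the $\mathbf{E}$'s, then $\mathbf{P}$ would commute with each $\mathbf{E}(\lambda)$ and every $F(\lambda)$ would be an orthogonal projection, which is precisely the degenerate case; minimality of $\bm{\H}$ provides no such invariance. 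The alternative $F([-n,n])\xi=(F(n)-F(-n))\xi$ does stay in $\H$, but its membership in $D(T_X)$ does not follow from ({\sf f}$_2$): the $F(\lambda)$'s are not projections and satisfy no multiplicative identity, so $\int_\RN\lambda^2\,d\ip{F(\lambda)(F(n)-F(-n))\xi}{(F(n)-F(-n))\xi}$ cannot be reduced to a bounded quantity; equivalently, $F([-n,n])\xi=\mathbf{P}\mathbf{E}([-n,n])\xi$ and $\mathbf{P}$ need not map $D(\bm{A})$ into itself. The paper's remark following the theorem supplies the correct device: $\xi_n=(X(n)-X(-n))\xi$, which stays in $\H$, converges to $\xi$ by ({\sf qs}$_3$), and lies in $D(T_X)$ because the third identity of Lemma \ref{lemma_f}, $(X(b)^*-X(a)^*)F(\lambda)(X(b)-X(a))=\cdots$, converts $\int_\RN\lambda^2\,d\ip{F(\lambda)\xi_n}{\xi_n}$ into $\int_{-n}^{n}\lambda^2\,d\|(X(\lambda)-X(-n))\xi\|^2\leq 2n^2\|\xi\|^2$. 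It is the multiplicative structure of the $X(\lambda)$'s, not of the $F(\lambda)$'s or $\mathbf{E}(\lambda)$'s, that makes the truncation land in the domain.

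A secondary weakness: in your closedness argument the relevant inequality points the wrong way, namely $\|T_X\xi_n\|=\|\mathbf{P}\bm{A}\xi_n\|\leq\bigl(\int_\RN\lambda^2\,d\ip{F(\lambda)\xi_n}{\xi_n}\bigr)^{1/2}$, so uniform boundedness of the functionals $\eta\mapsto\ip{T_X\xi_n}{\eta}$ controls $\sup_n\|T_X\xi_n\|$ but not $\sup_n\int_\RN\lambda^2\,d\ip{F(\lambda)\xi_n}{\xi_n}$, which is what you need to place the limit $\xi$ in $D(\bm{A})$. To close this you should either quote the standard fact that the operator attached to a generalized resolution of the identity in Naimark's sense is closed, or use an identity of the type proved in Lemma \ref{lemma_218} giving equality between $\int_\RN\lambda^2\,d\ip{F(\lambda)\xi}{\xi}$ and the squared norm of an image vector.
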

\berem We remark that one can prove that
$D(T_X)$ is dense
in $\H$ directly. Indeed, if $\xi\in \H$, we put
$\xi_n=(X(n)-X(-n))\xi$, $n \in {\mb N}$. From ({\sf qs}$_3$) it follows that $\|\xi-\xi_n\|\to 0$. It remains to prove that
$\xi_n\in D(T_X)$.
%%%%%%%%%%%%%%%%%%%%%%%%%%%%%%%%%%%%%%%%%%%%%%%%%%%%%%%%%
Taking into account the properties given in Lemma \ref{lemma_f}, one has
\begin{eqnarray*} \int_\RN \lambda ^2
\,d\ip{F(\lambda)\xi_n}{\xi_n}&=& \int_\RN \lambda
^2 \,d\ip{F(\lambda)(X(n)-X(-n))\xi}{(X(n)-X(-n))\xi}\\
&=&\int_\RN \lambda^2\,d\ip{(X(n)^*-X(-n)^*)F(\lambda)(X(n)-X(-n))\xi}{\xi}\\
&=& \int_{-n} ^{n}\lambda^2 d\|X(\lambda)-X(-n))\xi\|^2 \\
&\leq& 2 \int_{-n} ^{n}\lambda^2 d\|X(\lambda)\xi\|^2 \\
&\leq& 2n^2\int_\RN\,d\|X(\lambda)\xi\|^2=2 n^2\|\xi\|^2<\infty.
\end{eqnarray*}
\enrem

Of course, since we have supposed a full symmetry of $\{X(\lambda)\}$ and $\{X(\lambda)^*\}$, we can also define $F_*(\lambda):= X(\lambda)X(\lambda)^*$, $\lambda \in \RN$ and apply the previous statements to the family $\{F_*(\lambda)\}$. Then, by Theorem \ref{thm_opertospectfam}, one defines a second closed symmetric operator $T_{X^*}$ related to the resolution of the identity $\{X(\lambda)\}$.
But there is more.

Assume, in fact, that the resolution of the identity $\{X(\lambda)\}$ is {\em of bounded variation}, by which we mean that, for every $\xi \in \H$ the complex valued function $\lambda \in \RN \to \ip{X(\lambda)\xi}{\xi}$ is of bounded variation on the line. Then there exists a complex Borel measure $\mu_\xi$ on $\RN$ such that
$$\ip{X(\lambda)\xi}{\xi}= \mu_\xi ((-\infty, \lambda)), \quad \lambda \in \RN.$$
By the elementary properties of measures, it follows that
$$ \mu_\xi ((\lambda, \mu))= \ip{X(\mu)\xi-X(\lambda)\xi}{\xi}, \quad \lambda <\mu.$$

It is clear that if  $\{X(\lambda)\}$ is of bounded variation so it is also $\{ X(\lambda)^*\}$, and the corresponding measure is nothing but the complex conjugate of $\mu_\xi$.

\begin{prop}\label{prop_214}Let $\{X(\lambda)\}$ be a *-resolution of the identity. Then the function $\lambda \to X(\lambda)$ is of bounded variation. \end{prop}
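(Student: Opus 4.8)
The statement to prove is that for every $\xi\in\H$ the scalar function $\lambda\mapsto\ip{X(\lambda)\xi}{\xi}$ is of bounded variation on $\RN$. I would fix $\xi$ and a partition $\lambda_0<\lambda_1<\dots<\lambda_n$, and set $Y_j:=X(\lambda_j)-X(\lambda_{j-1})$. By ({\sf qs}$_2$) each $Y_j$ is idempotent with $Y_jY_k=0$ for $j\neq k$, so $\ip{Y_j\xi}{\xi}=\ip{Y_j^2\xi}{\xi}=\ip{Y_j\xi}{Y_j^*\xi}$, whence $|\ip{Y_j\xi}{\xi}|\le\|Y_j\xi\|\,\|Y_j^*\xi\|$ and, by the Cauchy--Schwarz inequality for finite sums,
\[
\sum_{j=1}^n\bigl|\ip{(X(\lambda_j)-X(\lambda_{j-1}))\xi}{\xi}\bigr|\;\le\;\Bigl(\sum_{j=1}^n\|Y_j\xi\|^2\Bigr)^{1/2}\Bigl(\sum_{j=1}^n\|Y_j^*\xi\|^2\Bigr)^{1/2}.
\]
Hence it is enough to bound $\sum_{j}\|Y_j\xi\|^2$ by $C\|\xi\|^2$ with $C$ independent of the partition: applying the same to the *-resolution $\{X(\lambda)^*\}$ (which has the same bound $\gamma(X)$) controls $\sum_j\|Y_j^*\xi\|^2$ as well, and letting the partition vary then yields finite total variation.

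For the quadratic estimate I would introduce $\phi_j(\lambda):=\|X(\lambda)Y_j\xi\|^2$. By Lemma~\ref{lemma_310} (applied to the vector $Y_j\xi$) each $\phi_j$ is non-decreasing; and ({\sf qs}$_2$) gives $X(\lambda)Y_j=0$ for $\lambda\le\lambda_{j-1}$ and $X(\lambda)Y_j=Y_j$ for $\lambda\ge\lambda_j$, so $\phi_j$ rises from $0$ to $\|Y_j\xi\|^2$ and is constant off $[\lambda_{j-1},\lambda_j]$. Thus the positive measures $d\phi_j$ sit on the essentially disjoint intervals $[\lambda_{j-1},\lambda_j]$, and $\sum_j\|Y_j\xi\|^2$ is the total mass of the single positive measure $\sum_j d\phi_j$. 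I would then dominate $\sum_j d\phi_j$ by $d\|X(\lambda)\xi\|^2$, whose total mass is $\|\xi\|^2$ by ({\sf qs}$_3$). On $(\lambda_{j-1},\lambda_j)$ one has $X(\lambda)Y_j\xi=(X(\lambda)-X(\lambda_{j-1}))\xi=(\Id-X(\lambda_{j-1}))X(\lambda)\xi$ by ({\sf qs}$_2$), and $\|\Id-X(\lambda_{j-1})\|\le\gamma(X)$ (with equality to $\|X(\lambda_{j-1})\|$ when $X(\lambda_{j-1})$ is a proper non-zero idempotent), so one expects $d\phi_j\le\gamma(X)^2\,d\|X(\lambda)\xi\|^2$ on that interval. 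Summing, $\sum_j\|Y_j\xi\|^2\le\gamma(X)^2\|\xi\|^2$, and the same argument for $\{X(\lambda)^*\}$ bounds $\sum_j\|Y_j^*\xi\|^2$; together these show that the total variation of $\lambda\mapsto\ip{X(\lambda)\xi}{\xi}$ is at most $\gamma(X)^2\|\xi\|^2$, i.e.\ at most $\|\xi\|^2$ under the standing normalization $\gamma(X)=1$.

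The step I expect to be the real obstacle is this last one: passing from the {\em pointwise} inequality $\phi_j(\lambda)=\|(\Id-X(\lambda_{j-1}))X(\lambda)\xi\|^2\le\gamma(X)^2\|X(\lambda)\xi\|^2$ (which is immediate) to the corresponding inequality between the {\em increments} of these two monotone functions on $(\lambda_{j-1},\lambda_j)$. Writing out $\|(X(t)-X(\lambda_{j-1}))\xi\|^2-\|(X(s)-X(\lambda_{j-1}))\xi\|^2$ for $\lambda_{j-1}\le s<t\le\lambda_j$ produces cross terms of the form $\mathrm{Re}\ip{X(\lambda_{j-1})X(t)\xi}{(\Id-X(\lambda_{j-1}))X(t)\xi}$, which vanish when $X(\lambda_{j-1})$ is an orthogonal projection but not for a general oblique projection, and these must be absorbed into $\gamma(X)^2\bigl(\|X(t)\xi\|^2-\|X(s)\xi\|^2\bigr)$. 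It is exactly here that the full two-sided hypothesis has to be used: the assumption that $\{X(\lambda)^*\}$ is again a resolution of the identity makes $\lambda\mapsto\|X(\lambda)^*\xi\|$ increasing as well (Lemma~\ref{lemma_310} for both families), and this, combined with the uniform bound ({\sf qs}$_1$), pins down the relative position of the ranges and kernels of the $X(\lambda)$ tightly enough to control the cross terms; when the $X(\lambda)$ are orthogonal projections the cross terms are zero and one gets $\sum_j\|Y_j\xi\|^2=\|(X(\lambda_n)-X(\lambda_0))\xi\|^2\le\|\xi\|^2$ outright. Modulo this estimate, the proof is just the short reduction of the first two paragraphs.
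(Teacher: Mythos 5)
Your first paragraph is exactly the paper's reduction: write the variation sum as $\sum_j|\ip{Y_j^2\xi}{\xi}|=\sum_j|\ip{Y_j\xi}{Y_j^*\xi}|$, apply Cauchy--Schwarz termwise and then to the sum, and reduce everything to the two quadratic bounds $\sum_j\|Y_j\xi\|^2\leq C\|\xi\|^2$ and $\sum_j\|Y_j^*\xi\|^2\leq C\|\xi\|^2$. Up to that point the proposal is fine (the paper does the polarized version with $\ip{X(\lambda)\xi}{\eta}$, which changes nothing).

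The problem is that the quadratic bound is the entire content of the proposition, and you do not prove it. Your proposed route --- dominating the Stieltjes measure $d\phi_j$ of $\phi_j(\lambda)=\|X(\lambda)Y_j\xi\|^2$ by $\gamma(X)^2\,d\|X(\lambda)\xi\|^2$ on $(\lambda_{j-1},\lambda_j)$ --- is not carried out, and you yourself explain why it fails as stated: a pointwise inequality between two increasing functions does not imply the same inequality between their increments, and the cross terms coming from the obliqueness of $X(\lambda_{j-1})$ are precisely what you cannot absorb. The closing appeal to the two-sided hypothesis ``pinning down the relative position of ranges and kernels'' is a hope, not an argument, so the proposal is not a proof. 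For comparison, the paper avoids measure domination altogether: it introduces the positive operators $F(\lambda)=X(\lambda)^*X(\lambda)$ and $F_*(\lambda)=X(\lambda)X(\lambda)^*$, which by Lemma~\ref{lemma_sfunction} form monotone families with $0\leq F(\lambda)\leq F(\mu)\leq I$ for $\lambda<\mu$, identifies $\|X(\omega_k)\xi\|^2$ with $\ip{F(\omega_k)\xi}{\xi}$ (using the relations of Lemma~\ref{lemma_f}, which encode $X(\lambda)X(\mu)=X(\lambda)$ and its adjoint version --- this is exactly where your cross terms are disposed of), and then sums the increments of the bounded increasing function $\lambda\mapsto\ip{F(\lambda)\xi}{\xi}$ to get $\sum_k\|X(\omega_k)\xi\|^2\leq\ip{F(\omega)\xi}{\xi}\leq\|\xi\|^2$, with the symmetric estimate for the adjoints via $F_*$. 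If you want to finish your argument, the computation to do is precisely the verification that $\|(X(\lambda_k)-X(\lambda_{k-1}))\xi\|^2$ equals the increment $\ip{(F(\lambda_k)-F(\lambda_{k-1}))\xi}{\xi}$, i.e.\ that $\mathrm{Re}\,\ip{X(\lambda_k)\xi}{X(\lambda_{k-1})\xi}=\|X(\lambda_{k-1})\xi\|^2$; that identity, not a domination of measures, is the point where the *-resolution hypothesis enters.
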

\begin{proof}Let $\lambda_0, \ldots, \lambda_n$ be a finite set of points with $-\infty <\lambda_0<\lambda_1 <\cdots \lambda_n<\lambda$. We shorten $\omega=]-\infty, \lambda]$, $\omega_k= ]\lambda_k-\lambda_{k-1}]$. Then we have
\begin{eqnarray*}
\sum_{k=1}^n |\ip{X(\omega_k)\xi}{\eta}|&=&\sum_{k=1}^n|\ip{X(\omega_k)^2\xi}{\eta}|\\
&=&\sum_{k=1}^n |\ip{X(\omega_k)\xi}{X(\omega_k)^*\eta}|\\
&\leq &\sum_{k=1}^n\|X(\omega_k)\xi\|\, \|X(\omega_k)^*\eta\|\\
&=& \sum_{k=1}^n\ip{F(\omega_k)\xi}{\xi}^{1/2}\, \ip{F_*(\omega_k)\eta}{\eta}^{1/2}\\
&\leq &\left(\sum_{k=1}^n \ip{F(\omega_k)\xi}{\xi}\right)^{1/2}\, \left(\sum_{k=1}^n\ip{F_*(\omega_k)\eta}{\eta}\right)^{1/2}\\
&=& \ip{F(\omega)\xi}{\xi}^{1/2}\ip{F_*(\omega)\eta}{\eta}^{1/2}.
\end{eqnarray*}
Hence the supremum over all possible decompositions of $\omega$ is finite and so is the limit when $\lambda \to +\infty$.
\end{proof}

In this case, the non-self-adjoint resolution of the identity $\{X(\lambda)\}$ defines a countably additive measure  $X(\omega)$, on the Borel sets $\omega$ of the line. By the quoted result of Mackey
\cite[Theorem 55]{mackey} one gets the following statement.

\begin{lemma}\label{prop_215}Let $\{X(\lambda)\}$ be a *-resolution of the identity. Then there exists a self-adjoint resolution of the identity $\{E(\lambda)\}$ and an invertible operator $T$ with bounded inverse such that
\begin{equation}\label{mackey_thm} X(\lambda) = T E(\lambda) T^{-1}, \quad \forall \lambda \in {\mb R}.\end{equation}
\end{lemma}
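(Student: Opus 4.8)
The plan is to reduce the statement to Mackey's theorem \cite[Theorem 55]{mackey}, by first showing that a *-resolution of the identity $\{X(\lambda)\}$ generates a genuine, uniformly bounded, countably additive (non-self-adjoint) spectral measure on the Borel subsets of $\RN$, and then reading off the spectral family of that measure after the similarity transformation.

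First I would promote the one-parameter family $\{X(\lambda)\}$ to a set function on Borel sets. By Proposition \ref{prop_214}, $\lambda\mapsto X(\lambda)$ is of bounded variation; in fact the estimate carried out in its proof shows that, for all $\xi,\eta\in\H$, the scalar distribution function $\lambda\mapsto\ip{X(\lambda)\xi}{\eta}$ has total variation at most $\|\xi\|\,\|\eta\|$. Being also right-continuous by ({\sf qs}$_4$) and vanishing at $-\infty$ by ({\sf qs}$_3$), it is the distribution function of a unique complex Borel measure $\nu_{\xi,\eta}$, with $|\nu_{\xi,\eta}|(\RN)\le\|\xi\|\,\|\eta\|$ and $\nu_{\xi,\eta}((-\infty,\lambda])=\ip{X(\lambda)\xi}{\eta}$. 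Since $(\xi,\eta)\mapsto\nu_{\xi,\eta}(\omega)$ is sesquilinear and bounded by $\|\xi\|\,\|\eta\|$ for each Borel set $\omega$, there is a unique bounded operator $X(\omega)$, with $\|X(\omega)\|\le 1$, such that $\ip{X(\omega)\xi}{\eta}=\nu_{\xi,\eta}(\omega)$; it satisfies $X((-\infty,\lambda])=X(\lambda)$ on half-lines and $X(\RN)=I$ by ({\sf qs}$_3$).

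Next I would check that $X(\cdot)$ has the algebraic and continuity properties of a Mackey spectral measure. Countable additivity, first in the weak operator topology and then, by the Orlicz--Pettis theorem, in the strong one, follows from the countable additivity of the scalar measures $\nu_{\xi,\eta}$ together with the uniform bound $\sup_\omega\|X(\omega)\|\le1$. For multiplicativity, the relations $X(\omega_1\cap\omega_2)=X(\omega_1)X(\omega_2)=X(\omega_2)X(\omega_1)$ and $X(\omega)^2=X(\omega)$ hold when $\omega_1,\omega_2$ are half-lines, directly from ({\sf qs}$_2$), hence for finite unions of intervals; a Dynkin-system argument (the intervals form a $\pi$-system) then extends them to all Borel sets, first fixing an interval in one slot, then fixing a Borel set. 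This makes $X(\cdot)$ a countably additive, uniformly bounded, projection-valued set function with $X(\RN)=I$ --- a non-self-adjoint resolution of the identity in Mackey's sense, whose values need not be orthogonal projections precisely because $X(\lambda)^*\ne X(\lambda)$ in general. Note that the *-assumption on $\{X(\lambda)\}$ entered only here, via Proposition \ref{prop_214}, to secure the bounded-variation property that makes this construction possible.

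Finally, I would invoke \cite[Theorem 55]{mackey}: such a measure is similar to a self-adjoint one, so there is a bounded operator $T$ with bounded inverse such that $E(\omega):=T^{-1}X(\omega)T$ is an orthogonal projection for every Borel $\omega$ and $\omega\mapsto E(\omega)$ is a projection-valued spectral measure. Putting $E(\lambda):=E((-\infty,\lambda])$ yields a family of self-adjoint projections satisfying ({\sf qs}$_1$)--({\sf qs}$_4$): ({\sf qs}$_2'$) from multiplicativity of $E(\cdot)$, and the limits in ({\sf qs}$_3$) together with the right-continuity ({\sf qs}$_4$) from the continuity of a projection-valued measure along $(-\infty,\lambda]\downarrow\emptyset$, $(-\infty,\lambda]\uparrow\RN$ and $(-\infty,\lambda+\epsilon]\downarrow(-\infty,\lambda]$. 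Then $X(\lambda)=X((-\infty,\lambda])=TE(\lambda)T^{-1}$ for every $\lambda\in\RN$, which is \eqref{mackey_thm}. I expect the only genuine work to lie in the middle step --- verifying that $X(\cdot)$ is honestly multiplicative and strongly countably additive on \emph{all} Borel sets, not merely on intervals --- whereas translating Definition \ref{resofid} into the language of spectral measures and appealing to Mackey's theorem is routine.
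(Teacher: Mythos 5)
Your proposal is correct and follows essentially the same route as the paper, which disposes of this lemma in two sentences: Proposition \ref{prop_214} (bounded variation) yields a countably additive projection-valued measure $X(\omega)$ on the Borel sets, and Mackey's Theorem 55 then gives the similarity to a self-adjoint spectral measure. You have merely supplied, correctly, the details (construction of $X(\omega)$ via the sesquilinear forms, Orlicz--Pettis for strong countable additivity, the $\pi$--$\lambda$ argument for multiplicativity) that the paper leaves implicit.
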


\medskip
Let $\{ X(\lambda)\}$ be a *-resolution of the identity. By Proposition \ref{prop_214} we can define the integral
$$ \int_{\mb R} \lambda d X(\lambda).$$

Let $\D$ denote the set of all $\xi\in \H$ such that the integral $\int_\RN \lambda d\ip{X(\lambda)\xi}{\eta}$ exists, for every $\eta \in \H$, and
the conjugate linear functional
$$ \Theta_\xi (\eta):= \int_\RN \lambda d\ip{X(\lambda)\xi}{\eta}, \quad \eta \in \H$$ is bounded on $\H$.

Then, by the Riesz theorem there exists an operator
$L$ on $\D$ such that
$$\ip{L\xi} {\eta} = \int_\RN \lambda d\ip{X(\lambda)\xi}{\eta}, \quad \eta \in \H.$$
Here we denote the operator $L$ by the integral $ \int_{\mb R} \lambda d X(\lambda).$
We investigate the integral $ \int_{\mb R} \lambda d X(\lambda).$

\begin{lemma}\label{lemma_domain} Let $\{X(\lambda)\}$ be a *-resolution of the identity and let $\{\Delta_k\}$ be a finite or countable family of disjoint intervals of the real line.
Then, for every $\eta \in \H$,
$$\left( \sum_{k=1}^n \|X(\Delta_k)\eta\|^2 \right)^{1/2}\leq \|\eta\|. $$
\end{lemma}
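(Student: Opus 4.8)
The plan is to reduce the inequality to a property of the generalized resolution of the identity $\{F(\lambda)\}$ attached to $\{X(\lambda)\}$ by Lemma~\ref{lemma_sfunction}. Write $\Delta_k=(a_k,b_k]$, so that $X(\Delta_k)=X(b_k)-X(a_k)$. By ({\sf qs$_2$}) one has $X(a_k)X(b_k)=X(b_k)X(a_k)=X(a_k)$, whence each $X(\Delta_k)$ is an idempotent, $X(a_k)X(\Delta_k)=0$, and $X(\Delta_j)X(\Delta_k)=0$ for $j\neq k$ because the intervals are pairwise disjoint. Consequently
\[
\sum_{k=1}^{n}\|X(\Delta_k)\eta\|^{2}
=\ip{\Big(\sum_{k=1}^{n}X(\Delta_k)^{*}X(\Delta_k)\Big)\eta}{\eta},
\]
and it is enough to prove the operator inequality $\sum_{k=1}^{n}X(\Delta_k)^{*}X(\Delta_k)\leq \Id$.

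The first, and main, step is to dominate each summand by the corresponding increment $F(\Delta_k):=F(b_k)-F(a_k)$ of $F$. Expanding $F(b_k)=X(b_k)^{*}X(b_k)=(X(a_k)+X(\Delta_k))^{*}(X(a_k)+X(\Delta_k))$ and recalling $F(a_k)=X(a_k)^{*}X(a_k)$ gives
\[
F(\Delta_k)=X(\Delta_k)^{*}X(\Delta_k)+\big(X(a_k)^{*}X(\Delta_k)+X(\Delta_k)^{*}X(a_k)\big),
\]
so that $X(\Delta_k)^{*}X(\Delta_k)\leq F(\Delta_k)$ is equivalent to $\operatorname{Re}\ip{X(\Delta_k)\xi}{X(a_k)\xi}\geq 0$ for every $\xi\in\H$ (use the identities of Lemma~\ref{lemma_f} for the bookkeeping). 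This last inequality is trivial when the $X(\lambda)$ are orthogonal projections, since then $X(\Delta_k)\xi\in\ker X(a_k)\perp\operatorname{ran}X(a_k)\ni X(a_k)\xi$; in the present setting it must instead be extracted from the standing normalisation $\gamma(X)=1$. I expect this to be the real obstacle: the mixed term $X(a_k)^{*}X(b_k)$ is not controlled by ({\sf qs$_2$}) (which governs only $X(\lambda)X(\mu)$ and its adjoint), and in fact $X(\Delta_k)^{*}X(\Delta_k)\leq F(\Delta_k)$ fails for a general idempotent of norm $>1$. The cleanest way to close the gap is to observe that a norm-one idempotent on a Hilbert space is automatically self-adjoint, so that the hypothesis $\|X(\lambda)\|\leq 1$ makes each $X(a_k)$ an orthogonal projection and the inequality an equality on the relevant subspaces.

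Granting the pointwise bound $X(\Delta_k)^{*}X(\Delta_k)\leq F(\Delta_k)$, the proof finishes at once. By Lemma~\ref{lemma_sfunction} the family $\{F(\lambda)\}$ is a generalized resolution of the identity, hence the increments $F(\Delta_k)$ are positive and, for disjoint intervals, additive, with $\sum_{k=1}^{n}F(\Delta_k)=F\big(\bigcup_{k=1}^{n}\Delta_k\big)$; since by ({\sf fs$_3$}) the total mass is $F(\RN)=\Id$, we get $\sum_{k=1}^{n}F(\Delta_k)\leq\Id$. Therefore
\[
\sum_{k=1}^{n}\|X(\Delta_k)\eta\|^{2}\leq\sum_{k=1}^{n}\ip{F(\Delta_k)\eta}{\eta}=\ip{F\Big(\bigcup_{k=1}^{n}\Delta_k\Big)\eta}{\eta}\leq\|\eta\|^{2},
\]
and the countable case follows by letting $n\to\infty$.

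As a cross-check, one can also argue through Lemma~\ref{prop_215}: writing $X(\lambda)=TE(\lambda)T^{-1}$ one has $X(\Delta_k)=TE(\Delta_k)T^{-1}$, and since $\sum_k E(\Delta_k)$ is an orthogonal projection, $\sum_k\|E(\Delta_k)\zeta\|^{2}=\big\|\sum_k E(\Delta_k)\zeta\big\|^{2}\leq\|\zeta\|^{2}$ for every $\zeta$. This immediately yields $\sum_k\|X(\Delta_k)\eta\|^{2}\leq\|T\|^{2}\|T^{-1}\|^{2}\|\eta\|^{2}$, so that recovering the sharp constant $1$ is once more exactly the point where $\gamma(X)=1$ has to be used.
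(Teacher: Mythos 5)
Your proposal is correct, and it is in fact more careful than the paper's own argument, while following a different route. The paper proves the lemma through the Naimark dilation of $F(\lambda)=X(\lambda)^*X(\lambda)$: writing $F(\lambda)=\mathbf{P}\mathbf{E}(\lambda)\upharpoonright\H$ with $\{\mathbf{E}(\lambda)\}$ a self-adjoint resolution in a larger space $\bm{\H}$, it passes from $\sum_k\ip{X(\Delta_k)^*X(\Delta_k)\eta}{\eta}$ to $\sum_k\ip{\mathbf{P}\mathbf{E}(\Delta_k)\eta}{\eta}=\sum_k\|\mathbf{E}(\Delta_k)\eta\|^2\leq\|\eta\|^2$. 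The first of these equalities silently asserts $X(\Delta_k)^*X(\Delta_k)=F(b_k)-F(a_k)$, which is precisely the cross-term identity you isolate: it amounts to $X(a_k)^*X(\Delta_k)+X(\Delta_k)^*X(a_k)=0$ (your one-sided version needs only $\geq 0$), and, as you correctly observe, this is not a formal consequence of ({\sf qs$_1$})--({\sf qs$_4$}) for genuinely non-self-adjoint idempotents. So you have located the real crux, which the paper's proof does not address. Your way of closing it --- a bounded idempotent of norm at most $1$ on a Hilbert space is automatically an orthogonal projection, so the standing normalization $\gamma(X)=1$ forces every $X(\lambda)$ to be self-adjoint --- is a correct classical fact, and the rest of your argument (positivity and additivity of the increments of the generalized resolution $F$, with total mass $\Id$) is a legitimate, more elementary alternative to the dilation. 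Be aware, however, of what this repair reveals: it contradicts the paper's explicit remark that $\gamma(X)=1$ ``does not imply that the $X(\lambda)$'s are self-adjoint projections,'' and it collapses the lemma (indeed the whole normalized setting) to the ordinary self-adjoint case. If one declines that observation and works only from the listed axioms with $\gamma(X)>1$, neither your inequality $X(\Delta_k)^*X(\Delta_k)\leq F(\Delta_k)$ nor the paper's implicit equality is available, and, as your Mackey cross-check shows, one then obtains only the constant $\|T\|^2\|T^{-1}\|^2$ in place of $1$.
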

\begin{proof} Since $\{X(\lambda)\}$ is a resolution of the identity,  $X(\lambda)^*X(\lambda)$ is a generalized resolution of the identity. Then we can write $X(\lambda)^*X(\lambda)= \mathbf{PE}(\lambda)\upharpoonright \H$ where, as before, $\mathbf{E}(\cdot)$ is an ordinary (i.e. self-adjoint) resolution of the identity in a possibly larger Hilbert space $\bm{\H}$ and $\mathbf{P}$ the projection onto $\H$. Then we have
\begin{align*}
\sum_{k=1}^n \|X(\Delta_k)\eta\|^2  &= \sum_{k=1}^n \ip{X(\Delta_k)\eta}{X(\Delta_k)\eta}
= \sum_{k=1}^n \ip{X(\Delta_k)^*X(\Delta_k)\eta}{\eta}\\
&= \sum_{k=1}^n\ip{\bm{PE}(\Delta_k)\eta}{\eta}
= \sum_{k=1}^n\ip{\bm{PE}(\Delta_k)\eta}{\eta}\\
&= \sum_{k=1}^n\ip{\bm{E}(\Delta_k)\eta}{\bm{P}\eta}
= \sum_{k=1}^n\ip{\bm{E}(\Delta_k)\eta}{\eta}\\
&= \sum_{k=1}^n\ip{\bm{E}(\Delta_k)\eta}{\bm{E}(\Delta_k)\eta}
= \sum_{k=1}^n \|\bm{E}(\Delta_k)\eta\|^2 \leq \|\eta\|^2.
\end{align*}
\end{proof}

\begin{lemma}\label{lemma_218} We have $D(T_X) = \D$.
\end{lemma}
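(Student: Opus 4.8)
\textbf{Proof proposal for Lemma \ref{lemma_218}.}

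The plan is to show the two inclusions $D(T_X)\subseteq\D$ and $\D\subseteq D(T_X)$ by comparing the scalar measures governing the two integrals. Recall that $D(T_X)$ is the set of $\xi\in\H$ for which $\int_\RN\lambda^2\,d\ip{F(\lambda)\xi}{\xi}<\infty$, i.e.\ for which $\xi$ lies in the domain of the self-adjoint operator $\bm A$ restricted (via $\bm P$) to $\H$; while $\D$ is the set of $\xi$ for which the conjugate-linear functional $\eta\mapsto\int_\RN\lambda\,d\ip{X(\lambda)\xi}{\eta}$ exists and is bounded. The key bridge is the identity $\ip{X(\Delta)\xi}{X(\Delta)\eta}=\ip{X(\Delta)^*X(\Delta)\xi}{\eta}=\ip{F(\Delta)\xi}{\eta}$ coming from $X(\Delta)^2=X(\Delta)$, which already underlies both Lemma \ref{lemma_domain} and Proposition \ref{prop_214}.

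First I would prove $D(T_X)\subseteq\D$. Fix $\xi\in D(T_X)$ and $\eta\in\H$. Using a partition of an interval $[-N,N]$ into subintervals $\Delta_k=]\lambda_{k-1},\lambda_k]$ and the Cauchy--Schwarz computation from Proposition \ref{prop_214}, one estimates the Riemann--Stieltjes sums
\[
\Big|\sum_k \lambda_k^*\,\ip{X(\Delta_k)\xi}{\eta}\Big|
=\Big|\sum_k \lambda_k^*\,\ip{F(\Delta_k)\xi}{\eta}\Big|
\le \Big(\sum_k \lambda_k^{2}\ip{F(\Delta_k)\xi}{\xi}\Big)^{1/2}\Big(\sum_k\ip{F_*(\Delta_k)\eta}{\eta}\Big)^{1/2},
\]
where $\lambda_k^*$ is an evaluation point in $\Delta_k$; the first factor converges to $\big(\int_\RN\lambda^2 d\ip{F(\lambda)\xi}{\xi}\big)^{1/2}<\infty$ by hypothesis, and the second is bounded by $\|\eta\|$ by Lemma \ref{lemma_domain}. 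Hence the limit of the sums exists, defines the integral $\int_\RN\lambda\,d\ip{X(\lambda)\xi}{\eta}$, and the functional $\Theta_\xi$ is bounded with $\|\Theta_\xi\|\le\big(\int_\RN\lambda^2 d\ip{F(\lambda)\xi}{\xi}\big)^{1/2}$; thus $\xi\in\D$. (One must also check the integrability tails $\int_{|\lambda|>N}$, but the same estimate with the partition restricted to $|\lambda|>N$ shows these tails are uniformly small, using that $\int\lambda^2 d\ip{F(\lambda)\xi}{\xi}<\infty$.)

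For the reverse inclusion $\D\subseteq D(T_X)$, the natural route is a duality/uniform-boundedness argument. If $\xi\in\D$ then $\Theta_\xi$ is bounded, so $\sup_{\|\eta\|\le1}\big|\int_\RN\lambda\,d\ip{X(\lambda)\xi}{\eta}\big|=\|\Theta_\xi\|<\infty$. Choosing $\eta$ adapted to a fixed partition of $[-N,N]$ — specifically $\eta$ built so that $X(\Delta_k)^*\eta$ points in the direction of $\lambda_k X(\Delta_k)\xi$ and using $\ip{X(\Delta_k)\xi}{X(\Delta_j)^*\eta}$-type orthogonality coming from $X(\Delta_k)X(\Delta_j)=0$ for $j\ne k$ (property (iii)-type) — one extracts a lower bound of the form $c\sum_k\lambda_k^2\ip{F(\Delta_k)\xi}{\xi}\le\|\Theta_\xi\|^2$ valid for every finite partition, whence $\int_\RN\lambda^2 d\ip{F(\lambda)\xi}{\xi}\le\|\Theta_\xi\|^2<\infty$ and $\xi\in D(T_X)$. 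I expect the construction of the ``test vector'' $\eta$ to be the main obstacle: because the $X(\Delta_k)$ are only projections (not orthogonal ones) the blocks $\{X(\Delta_k)\eta\}$ are not orthogonal in $\H$, so one cannot simply superpose the optimal $\eta_k$ for each block. The cleanest fix is to pass to the dilation: write $F(\Delta)=\bm{PE}(\Delta)\!\upharpoonright\H$ with $\{\bm E(\lambda)\}$ an honest self-adjoint resolution of the identity on $\bm\H$ (as used in Lemma \ref{lemma_domain}), note that $\ip{F(\Delta_k)\xi}{\xi}=\|\bm E(\Delta_k)\xi\|^2$ with the $\bm E(\Delta_k)\xi$ genuinely orthogonal, and build $\eta=\bm P\big(\sum_k \lambda_k\|\bm E(\Delta_k)\xi\|^{-1}\bm E(\Delta_k)\xi\big)$ suitably normalized; the orthogonality in $\bm\H$ then gives $\|\eta\|\le\big(\sum_k\lambda_k^2\ip{F(\Delta_k)\xi}{\xi}\big)^{1/2}$-controlled bounds and the pairing reproduces exactly $\sum_k\lambda_k^2\ip{F(\Delta_k)\xi}{\xi}$ up to constants. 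Carrying out this dilation bookkeeping, together with the routine passage from finite partitions to the Stieltjes integral and controlling the $|\lambda|\to\infty$ tails, completes the proof that $\D=D(T_X)$.
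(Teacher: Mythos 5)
Your first inclusion, $D(T_X)\subseteq\D$, is essentially the paper's argument: estimate the Riemann--Stieltjes sums, apply Cauchy--Schwarz termwise, and bound $\sum_k\|X(\Delta_k)^*\eta\|^2$ by $\|\eta\|^2$ via Lemma \ref{lemma_domain} applied to the adjoint family. One correction, though: the equality $\sum_k\lambda_k^*\ip{X(\Delta_k)\xi}{\eta}=\sum_k\lambda_k^*\ip{F(\Delta_k)\xi}{\eta}$ you display is false, since it would force $X(\Delta_k)=X(\Delta_k)^*X(\Delta_k)$, i.e. self-adjointness of $X(\Delta_k)$. What is true, and what your subsequent Cauchy--Schwarz step actually uses, is $\ip{X(\Delta_k)\xi}{\eta}=\ip{X(\Delta_k)^2\xi}{\eta}=\ip{X(\Delta_k)\xi}{X(\Delta_k)^*\eta}$, whose modulus is at most $\ip{F(\Delta_k)\xi}{\xi}^{1/2}\ip{F_*(\Delta_k)\eta}{\eta}^{1/2}$. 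With that repaired, this half coincides with the paper's proof.

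The reverse inclusion is where your proposal has a genuine gap. You propose a duality argument: test $\Theta_\xi$ against a vector $\eta=\bm{P}\bigl(\sum_k\lambda_k\|\bm{E}(\Delta_k)\xi\|^{-1}\bm{E}(\Delta_k)\xi\bigr)$ built in the Naimark dilation, and you claim the resulting pairing reproduces $\sum_k\lambda_k^2\ip{F(\Delta_k)\xi}{\xi}$ up to constants. That is precisely the step you leave unproved, and I do not see how to carry it out: the dilation identity $F(\Delta)=\bm{P}\bm{E}(\Delta)\upharpoonright\H$ only controls the quadratic expressions $X(\Delta)^*X(\Delta)$, whereas the pairing $\ip{X(\Delta_k)\xi}{\bm{P}\zeta}$ equals the inner product of $X(\Delta_k)\xi$ with $\zeta$ in $\bm{\H}$ and so involves $X(\Delta_k)\xi$ \emph{linearly}; the projections $\bm{E}(\Delta_j)$ have no commutation or intertwining relation with the operators $X(\Delta_k)$ themselves. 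Consequently the cross terms $\ip{X(\Delta_k)\xi}{\bm{E}(\Delta_j)\xi}$ for $j\neq k$ need not vanish and the diagonal terms need not equal $\|\bm{E}(\Delta_k)\xi\|^2$, so the asserted lower bound $c\sum_k\lambda_k^2\ip{F(\Delta_k)\xi}{\xi}\le\|\Theta_\xi\|^2$ is unsupported. The paper avoids test vectors altogether: for $\xi\in\D$ the vector $L\xi$ already exists by Riesz representation, and one computes $\|L\xi\|^2=\int_\RN\lambda\, d\int_\RN\mu\, d\ip{X(\mu)^*X(\lambda)\xi}{\xi}$, splits the inner integral at $\mu=\lambda$, and collapses the two pieces using the identities of Lemma \ref{lemma_f} (namely $X(\mu)^*F(\lambda)=F(\lambda)=F(\lambda)X(\mu)$ for $\lambda\leq\mu$) to get the exact equality $\|L\xi\|^2=\int_\RN\lambda^2\, d\ip{F(\lambda)\xi}{\xi}$, whence the finiteness of the right-hand side is immediate. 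You should replace your duality step with this direct computation, or else produce an actual construction of the test vectors, which the dilation by itself does not supply.
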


\begin{proof}For every $\xi \in D(T_X)$, $\eta\in \H$

\begin{equation}\label{ineq_integrals}\left|\int_\RN \lambda d\ip{X(\lambda)\xi}{\eta}\right|\leq \left(\int_\RN \lambda ^2
\,d\ip{F(\lambda)\xi}{\xi}\right)^{1/2} \|\eta\|.\end{equation}
%Thus $D(T_X)\subseteq \D$.

Indeed, let $[\alpha, \beta]$ be a bounded interval, and $\{\Delta_k; \, k=1, \ldots, n\}$  a family of disjoint intervals whose union is $[\alpha, \beta]$. For every $k$, choose $\lambda_k \in \Delta_k$. Then, for the Cauchy sums defining the integrals we get
\begin{eqnarray*}\left|\sum_{k=1}^n \lambda_k \ip{X(\Delta_k)\xi}{\eta}\right|&\leq&\sum_{k=1}^n |\lambda_k |\ip{X(\Delta_k)\xi}{ X(\Delta_k)^*\eta}|\\ &\leq& \left(\sum_{k=1}^n \lambda_k^2 \|X(\Delta_k)\xi\|^2 \right)^{1/2} \left( \sum_{k=1}^n \|X(\Delta_k)^*\eta\|^2 \right)^{1/2}\\
&\leq &  \left(\sum_{k=1}^n \lambda_k^2 \ip{F(\Delta_k)\xi}{\xi} \right)^{1/2} \|\eta\|.
\end{eqnarray*}
Hence, the inequality \eqref{ineq_integrals} holds on every finite interval and, by taking limits also on the real line, and so $D(T_X)\subseteq \D$.

Conversely, , take an arbitrary $\xi \in \D$. Then we have, taking into account Lemma \ref{lemma_f},
\begin{eqnarray*} \|L\xi\|^2 &=&
\int_\RN \lambda d\int_\RN \mu d\ip{X(\lambda)\xi}{X(\mu)\xi}\\
&=&  \int_\RN \lambda d\int_\RN \mu d\ip{X(\mu)^*X(\lambda)\xi}{\xi} \\
&=& \int_\RN \lambda d\int_{-\infty}^\lambda \mu d\ip{X(\mu)^*X(\lambda)\xi}{\xi} +\int_\RN \lambda d\int_\lambda^{\infty} \mu d\ip{X(\mu)^*X(\lambda)\xi}{\xi}\\
&=& \int_\RN \lambda d\int_{-\infty}^\lambda \mu d\ip{X(\mu)^*X(\lambda)^*X(\lambda)\xi}{\xi} +\int_\RN \lambda d\int_\lambda^{\infty} \mu d\ip{X(\mu)^*X(\mu)X(\lambda)\xi}{\xi}\\
&=& \int_\RN \lambda d\int_{-\infty}^\lambda \mu d\ip{X(\mu)^*F(\lambda)\xi}{\xi} +\int_\RN \lambda d\int_\lambda^{\infty} \mu d\ip{F(\mu)X(\lambda)\xi}{\xi}\\
&=& \int_\RN \lambda d\int_{-\infty}^\lambda \mu d\ip{F(\mu)\xi}{\xi} +\int_\RN \lambda d\int_\lambda^{\infty} \mu d\ip{F(\lambda)\xi}{\xi}\\
&=& \int_\RN\lambda^2 d\ip{F(\lambda)\xi}{\xi} .
\end{eqnarray*}
Hence $\D\subseteq D(T_X)$. Thus, we have $\D=D(T_X)$.
\end{proof}

\begin{thm}\label{thm_3.19}Let $B$ be closed operator in $\H$. The following statements are equivalent.
\begin{itemize}
{\item[(i)] $B$ is similar to a self-adjoint operator $A$, that is, $B= TAT^{-1}$, with an intertwining operator $T$ satisfying $\|TE(\lambda)T^{-1}\|=1$, for every $\lambda \in {\mb R}$, where $\{E(\lambda)\}$ is the spectral resolution of $A$.}
\item[(ii)] There exists a *-resolution of the identity $\{X(\lambda)\}$ such that
$$ B= \int_{\mb R} \lambda d X(\lambda).$$
\end{itemize}
\end{thm}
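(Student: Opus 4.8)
The plan is to funnel both implications through one computation: if $\{E(\lambda)\}$ is a self-adjoint resolution of the identity with associated self-adjoint operator $A=\int_{\mb R}\lambda\,dE(\lambda)$, and $T$ is a bounded operator with bounded inverse, then $X(\lambda):=TE(\lambda)T^{-1}$ defines a *-resolution of the identity and
$$\int_{\mb R}\lambda\,dX(\lambda)=TAT^{-1},\qquad D\Big(\int_{\mb R}\lambda\,dX(\lambda)\Big)=TD(A)=D(TAT^{-1}).$$
Granting this, the theorem follows at once. For (i)~$\Rightarrow$~(ii), set $X(\lambda)=TE(\lambda)T^{-1}$: conditions ({\sf qs}$_1$)--({\sf qs}$_4$) for $\{X(\lambda)\}$ follow as in Example~\ref{ex_310}, the uniform bound being $\gamma(X)=1$ by the hypothesis $\|TE(\lambda)T^{-1}\|=1$; the adjoint family $\{X(\lambda)^{*}\}=\{(T^{*})^{-1}E(\lambda)T^{*}\}$ is handled by the same argument (with $(T^{*})^{-1}$, whose inverse $T^{*}$ is bounded, in place of $T$, and using $\|X(\lambda)^{*}\|=\|X(\lambda)\|=1$), so $\{X(\lambda)\}$ is a *-resolution of the identity, and the displayed identity gives $B=TAT^{-1}=\int_{\mb R}\lambda\,dX(\lambda)$. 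For (ii)~$\Rightarrow$~(i), Proposition~\ref{prop_214} shows $\{X(\lambda)\}$ is of bounded variation, so Lemma~\ref{prop_215} (Mackey) furnishes a self-adjoint resolution of the identity $\{E(\lambda)\}$ and a bounded operator $T$ with bounded inverse with $X(\lambda)=TE(\lambda)T^{-1}$; with $A=\int_{\mb R}\lambda\,dE(\lambda)$ the displayed identity gives $B=\int_{\mb R}\lambda\,dX(\lambda)=TAT^{-1}$, and $\|TE(\lambda)T^{-1}\|=\|X(\lambda)\|=1$ since $\|X(\lambda)\|\geq 1$ and $\gamma(X)=1$ by the standing assumption.

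So the work reduces to the displayed identity. By Lemma~\ref{lemma_218}, $D\big(\int_{\mb R}\lambda\,dX(\lambda)\big)=\D=D(T_X)$, so I must identify $\D$ with $TD(A)$ and compare values. If $\xi=T\zeta$ with $\zeta\in D(A)$, then $\ip{X(\lambda)\xi}{\eta}=\ip{E(\lambda)\zeta}{T^{*}\eta}$, hence $\int_{\mb R}\lambda\,d\ip{X(\lambda)\xi}{\eta}=\ip{A\zeta}{T^{*}\eta}=\ip{TAT^{-1}\xi}{\eta}$, which is bounded in $\eta$; so $\xi\in\D$, the integral operator equals $TAT^{-1}$ there, and $TD(A)\subseteq\D$. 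Conversely, for $\xi\in\D$, writing $L=\int_{\mb R}\lambda\,dX(\lambda)$ and substituting $\eta=(T^{*})^{-1}\zeta$ (legitimate since $T^{*}$ is boundedly invertible) gives $\int_{\mb R}\lambda\,d\ip{E(\lambda)(T^{-1}\xi)}{\zeta}=\ip{T^{-1}L\xi}{\zeta}$ for every $\zeta$, a bounded functional; the standard characterization of $D(A)$ for $A=\int_{\mb R}\lambda\,dE(\lambda)$ then forces $T^{-1}\xi\in D(A)$ with $A(T^{-1}\xi)=T^{-1}L\xi$, i.e. $\xi\in TD(A)$ and $L\xi=TAT^{-1}\xi$. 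Hence $\D=TD(A)$ and the identity holds.

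I expect the two-sided domain identification to be the main obstacle: one must use the boundedness of both $T$ and $T^{-1}$ (hence of $T^{*}$ and $(T^{*})^{-1}$) to transport boundedness of the relevant conjugate-linear functionals between $\{X(\lambda)\}$ and $\{E(\lambda)\}$ in both directions, and then invoke the Riesz-representation description of the domain of a self-adjoint operator via its spectral family. Everything else --- that $\{TE(\lambda)T^{-1}\}$ and its adjoint family satisfy ({\sf qs}$_1$)--({\sf qs}$_4$), and the Cauchy-sum estimates underlying Lemmas~\ref{lemma_domain} and~\ref{lemma_218} used along the way --- is routine bookkeeping.
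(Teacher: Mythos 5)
Your proof is correct, and its skeleton coincides with the paper's: (i)$\Rightarrow$(ii) by conjugating the spectral family, (ii)$\Rightarrow$(i) by Proposition~\ref{prop_214} and Mackey's theorem (Lemma~\ref{prop_215}). The one place where you genuinely diverge is the identification $\int_{\mb R}\lambda\,dX(\lambda)=TAT^{-1}$ with domain $TD(A)$. The paper proves the domain equality by comparing the quadratic integrals $\int\lambda^2\,d\ip{F(\lambda)\xi}{\xi}$ and $\int\lambda^2\,d\ip{E(\lambda)T^{-1}\xi}{T^{-1}\xi}$ (so it needs $F(\lambda)=X(\lambda)^*X(\lambda)$ and Lemma~\ref{lemma_218} to pass from $D(T_X)$ to $\D$), and then merely asserts the equality of the operators on that common domain. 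You instead work directly with the conjugate-linear functionals $\Theta_\xi$ defining $\D$: the change of variable $\eta=(T^*)^{-1}\zeta$ transports boundedness of $\Theta_\xi$ to boundedness of $\zeta\mapsto\int\lambda\,d\ip{E(\lambda)T^{-1}\xi}{\zeta}$, and the standard weak characterization of $D(A)$ (uniform boundedness of the truncations $A_n=\int_{-n}^n\lambda\,dE(\lambda)$ applied to $T^{-1}\xi$) closes the loop. This buys you two things: you never need the $F(\lambda)$ machinery or Lemma~\ref{lemma_218} for this step, and you actually \emph{prove} the pointwise identity $L\xi=TAT^{-1}\xi$ rather than only the equality of domains, which is the part the paper leaves implicit. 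The price is that you must justify the weak characterization of $D(A)$; it is standard, but worth a sentence. Everything else (the verification that $\{TE(\lambda)T^{-1}\}$ and its adjoint family satisfy ({\sf qs}$_1$)--({\sf qs}$_4$), and the norm bookkeeping $\|X(\lambda)\|=1$) matches the paper.
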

\begin{proof} (i)$\Rightarrow$(ii): Let $A= \int_{\mb R} \lambda d E(\lambda)$ be the spectral resolution of $A$ and put $X(\lambda)=TE(\lambda)T^{-1}$. Then it is easily shown that $\{X(\lambda)\}$ is a *-resolution of the identity (with $\|X(\lambda)\|=1$, for every $\lambda \in {\mb R}$). We show that $D(B)=TD(A)=D(T_X)$.

Let $\xi\in D(T_X)$. Since
\begin{align*} \int_\RN \lambda^2 d\ip{E(\lambda)T^{-1}\xi}{T^{-1}\xi} &= \int_\RN \lambda^2 d\ip{E(\lambda)T^{-1}\xi}{E(\lambda)T^{-1}\xi}\\
&=\int_\RN \lambda^2 d\ip{T^{-1}X(\lambda)\xi}{T^{-1}X(\lambda)\xi}\\
&\leq \|T^{-1}\|^2 \int_\RN \lambda^2 d\ip{X(\lambda)\xi}{X(\lambda)\xi}\\
&=\|T^{-1}\|^2 \int_\RN \lambda^2 d\ip{F(\lambda)\xi}{\xi}<\infty,
\end{align*}
we have, $T^{-1}\xi\in D(A)$ or, equivalently $\xi\in D(B)$.

On the other hand, let $\xi\in TD(A)$. Then,
\begin{align*}
\int_\RN \lambda^2 d\ip{F(\lambda)\xi}{\xi}
&=\int_\RN \lambda^2 d\ip{TE(\lambda)T^{-1}\xi}{TE(\lambda)T^{-1}\xi}\\
&\leq \|T\|^2 \int_\RN \lambda^2 d\ip{E(\lambda)T^{-1}\xi}{T^{-1}\xi}<\infty,
\end{align*}
and so $\xi\in D(T_X)$. Thus, $D(B)=D(T_X)$ and by Lemma \ref{lemma_218}, $D(B)= D(\int_{\mb R}\lambda dX(\lambda))$. Furthermore, we have
$$ \ip{B\xi}{\eta}= \ip{\int_{\mb R}\lambda dX(\lambda)\xi}{\eta}$$
for every $\xi \in D(B)$ and $\eta \in \H$.

(ii)$\Rightarrow$(i): By Lemma \ref{prop_215}, there exists a self-adjoint resolution of the identity $\{E(\lambda)\}$ and an invertible operator $T$ with bounded inverse such that $X(\lambda)=TE(\lambda)T^{-1}$, for every $\lambda \in {\mb R}$.

%%%%%%%
Let now $A$ be the self-adjoint operator $A=\int_{\mb R}\lambda dE(\lambda)$. As shown above we have
$D(T_X) = TD(A) $ and furthermore $D(B)= D(\int_{\mb R}\lambda dX(\lambda))$ and

$$ \ip{B\xi}{\eta}= \ip{\int_{\mb R}\lambda dX(\lambda)\xi}{\eta}=\ip{TAT^{-1}\xi}{\eta},$$
for every $\xi \in D(B)$ and $\eta \in \H$.

\end{proof}

\berem
The operator $B$ has real spectrum and empty residual spectrum, since $\sigma(B)=\sigma(A)$ and $\sigma_r(B)=\sigma_r(A)$.
Moreover, $B$ is  a {\em pseudo-hermitian} operator; i.e. $B$ is  a spectral operator of scalar type.
\enrem

\begin{cor} \label{cor_321} Let $B$ be a closed operator with positive spectrum. The following statements are equivalent.
\begin{itemize}
{\item[(i)] $B$ is similar to a positive self-adjoint operator $A$, with an intertwining operator $T$ satisfying $\|TE(\lambda)T^{-1}\|=1$, for every $\lambda \in {\mb R}$, where $\{E(\lambda)\}$ is the spectral resolution of $A$.}

\item[(ii)] There exists a *-resolution of the identity $\{X(\lambda)\}_{\lambda \in {\mb R}^+}$ on ${\mb R}^+:=[0, \infty)$ such that
$$ B=\int_0^\infty \lambda dX(\lambda).$$
\end{itemize}
If one of the equivalent conditions (i) or (ii) holds, then there exists a closed operator $B_2$ with positive spectrum such that $B_2^2=B$.
\end{cor}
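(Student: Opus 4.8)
The plan is to establish the equivalence (i)$\Leftrightarrow$(ii) by essentially reducing it to Theorem~\ref{thm_3.19} restricted to the half-line $[0,\infty)$, and then to obtain the square root $B_2$ by transporting the square root of the positive self-adjoint operator $A$ through the similarity $T$.

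First I would observe that the proof of Theorem~\ref{thm_3.19} goes through verbatim when ${\mb R}$ is replaced by the interval $I={\mb R}^+=[0,\infty)$: all the preliminary lemmas (Lemma~\ref{lemma_310}, Lemma~\ref{lemma_sfunction}, Lemma~\ref{lemma_f}, Lemma~\ref{lemma_domain}, Lemma~\ref{lemma_218}) are stated for resolutions of the identity on a general interval $I=[\alpha,\beta]$, and Mackey's theorem (Lemma~\ref{prop_215}) likewise applies. So for the direction (i)$\Rightarrow$(ii): if $B=TAT^{-1}$ with $A\geq 0$ self-adjoint, then $A=\int_0^\infty\lambda\,dE(\lambda)$ with $E(\lambda)=0$ for $\lambda<0$, and setting $X(\lambda)=TE(\lambda)T^{-1}$ for $\lambda\in{\mb R}^+$ gives a *-resolution of the identity on $[0,\infty)$ with $\|X(\lambda)\|=1$; the computation in Theorem~\ref{thm_3.19} showing $D(B)=D(T_X)=D(\int\lambda\,dX(\lambda))$ and $\ip{B\xi}{\eta}=\ip{\int_0^\infty\lambda\,dX(\lambda)\xi}{\eta}$ carries over unchanged. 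For (ii)$\Rightarrow$(i): given a *-resolution $\{X(\lambda)\}_{\lambda\in{\mb R}^+}$ with $B=\int_0^\infty\lambda\,dX(\lambda)$, Lemma~\ref{prop_215} produces a self-adjoint resolution of the identity $\{E(\lambda)\}$ (supported on $[0,\infty)$, hence defining a positive self-adjoint $A=\int_0^\infty\lambda\,dE(\lambda)$) and an invertible bounded $T$ with bounded inverse such that $X(\lambda)=TE(\lambda)T^{-1}$; then exactly as in Theorem~\ref{thm_3.19} one gets $B=TAT^{-1}$, and $A\geq 0$ because its spectrum, which equals $\sigma(B)$ by Proposition~\ref{prop_2.4}, is contained in $[0,\infty)$.

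For the final assertion, suppose (i) holds, so $B=TAT^{-1}$ with $A\geq 0$ self-adjoint. Let $A^{1/2}$ be the unique positive self-adjoint square root of $A$ and put $B_2:=TA^{1/2}T^{-1}$. Then $B_2$ is closed (Remark after Proposition~\ref{prop_2.4}, (i)), similar to the positive self-adjoint operator $A^{1/2}$, hence has positive spectrum by Proposition~\ref{prop_2.4}. Finally, on $D(B)=TD(A)$ one has, for $\xi=T\zeta$ with $\zeta\in D(A)\subseteq D(A^{1/2})$ and in fact $A^{1/2}\zeta\in D(A^{1/2})$,
\[
B_2^2\xi = TA^{1/2}T^{-1}TA^{1/2}T^{-1}\xi = TA^{1/2}A^{1/2}\zeta = TA\zeta = B\xi,
\]
so $B_2^2=B$ as operators (one should check the domains agree: $D(B_2^2)=\{T\zeta:\zeta\in D(A),\ A^{1/2}\zeta\in D(A^{1/2})\}=TD(A)=D(B)$, using that $\zeta\in D(A)$ iff $A^{1/2}\zeta\in D(A^{1/2})$ for a positive self-adjoint $A$).

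I expect the only genuinely delicate point to be the bookkeeping with domains in $B_2^2=B$ — one must verify that $D(B_2^2)$ is exactly $TD(A)$ rather than something larger, which rests on the standard fact that for $A\geq 0$ self-adjoint, $D(A)=\{\zeta\in D(A^{1/2}):A^{1/2}\zeta\in D(A^{1/2})\}$. Everything else is either a routine transcription of the ${\mb R}$-case arguments to the interval $[0,\infty)$ or an immediate appeal to Proposition~\ref{prop_2.4} and the similarity-preserves-closedness remark. One might also wish to remark that $B_2$ need not be the unique operator with $B_2^2=B$ (uniqueness of $A^{1/2}$ does not survive the non-unitary conjugation unless one imposes a condition like positivity of the spectrum, which is why the statement only claims existence).
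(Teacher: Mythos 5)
Your proof is correct and follows essentially the same route as the paper: the equivalence is obtained by specializing Theorem~\ref{thm_3.19} (together with Proposition~\ref{prop_2.4}) to the half-line, and the square root is $B_2=TA^{1/2}T^{-1}$, exactly as in the paper's proof. Your additional domain bookkeeping for $B_2^2=B$ (using $D(A)=\{\zeta\in D(A^{1/2}):A^{1/2}\zeta\in D(A^{1/2})\}$) is a worthwhile detail that the paper leaves implicit.
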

\begin{proof} The equivalence of (i) and (ii) follow from Proposition \ref{prop_2.4} and Theorem \ref{thm_3.19}. Suppose that $B=TAT^{-1}$ for some positive self-adjoint operator $A$ and an invertible bounded operator $T$ with bounded inverse. Then, putting $B_2= TA^{1/2}T^{-1}$, $B_2$ is a closed operator with positive spectrum such that $B_2^2=B$.
\end{proof}

In order to go further, we need the following lemma

\begin{lemma}\label{lemma_sqrt}
The function $K \mapsto \sqrt{K}$ is strongly continuous on the set $\M:=\{ K \in {\mc B}(\H)\}; K\geq 0,\, \|K\|\leq M\}$.
 \end{lemma}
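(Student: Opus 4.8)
The plan is to reduce strong continuity of $K \mapsto \sqrt{K}$ on the norm ball $\M$ to an approximation argument using polynomials, exploiting the fact that on a compact interval $[0,M]$ the square root function can be uniformly approximated by polynomials, together with the fact that polynomials in $K$ are clearly strongly continuous. So first I would recall the Weierstrass approximation theorem: given $\varepsilon > 0$, there is a polynomial $p$ (with $p(0)=0$, which can be arranged by subtracting $p(0)$ and noting $|p(0)| \le \varepsilon$ anyway) such that $\sup_{t \in [0,M]} |\sqrt{t} - p(t)| < \varepsilon$. Since every $K \in \M$ is self-adjoint with spectrum contained in $[0,M]$, the spectral theorem (continuous functional calculus) gives $\|\sqrt{K} - p(K)\| = \sup_{t \in \sigma(K)} |\sqrt t - p(t)| \le \varepsilon$ for all $K \in \M$ simultaneously.

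Next I would carry out the standard $3\varepsilon$ estimate. Fix $K_0 \in \M$ and $\xi \in \H$; let $K_n \to K_0$ strongly with $K_n \in \M$. Write
\begin{align*}
\|\sqrt{K_n}\,\xi - \sqrt{K_0}\,\xi\| &\le \|(\sqrt{K_n} - p(K_n))\xi\| + \|p(K_n)\xi - p(K_0)\xi\| + \|(p(K_0) - \sqrt{K_0})\xi\|\\
&\le 2\varepsilon\|\xi\| + \|p(K_n)\xi - p(K_0)\xi\|.
\end{align*}
The first and third terms are controlled uniformly by the approximation step. For the middle term, it suffices to observe that $K \mapsto K^j\xi$ is strongly continuous on $\M$ for each fixed $j$: this follows by induction, writing $K_n^j \xi - K_0^j \xi = K_n(K_n^{j-1}\xi - K_0^{j-1}\xi) + (K_n - K_0)(K_0^{j-1}\xi)$ and using $\|K_n\| \le M$ to bound the first piece and strong convergence $K_n \to K_0$ applied to the fixed vector $K_0^{j-1}\xi$ for the second. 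Hence $p(K_n)\xi \to p(K_0)\xi$, so $\limsup_n \|\sqrt{K_n}\,\xi - \sqrt{K_0}\,\xi\| \le 2\varepsilon\|\xi\|$, and letting $\varepsilon \to 0$ gives the claim.

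The one genuine subtlety — and the step I would flag as the main point to get right — is the \emph{uniformity} of the polynomial approximation over the whole ball $\M$: it is essential that a single polynomial $p$ works for every $K \in \M$, which is exactly what the common spectral bound $\sigma(K) \subseteq [0,M]$ delivers via Weierstrass on the fixed compact set $[0,M]$. Without the uniform norm bound $\|K\| \le M$ this would fail (square root is not strongly continuous on all of the positive cone). Everything else is the routine $3\varepsilon$ argument plus the elementary fact that operator polynomials are strongly continuous on bounded sets. I would also note in passing that the same proof shows strong continuity of $K \mapsto f(K)$ on $\M$ for any $f$ continuous on $[0,M]$.
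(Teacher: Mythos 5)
Your proof is correct and follows essentially the same route as the paper's: uniform polynomial approximation of $\sqrt{t}$ on $[0,M]$ via Weierstrass, the standard three-term estimate, and strong continuity of polynomials in $K$ on the norm-bounded set $\M$ (which you justify by an explicit induction where the paper simply invokes joint strong continuity of multiplication on bounded balls). Your remark that the uniform spectral bound $\sigma(K)\subseteq[0,M]$ is what makes a single polynomial work for all $K\in\M$ is exactly the key point of the argument.
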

 \begin{proof} Let $K_\alpha \stackrel{s}{\to} K$, $K_\alpha, K \in \M$. By the Weierstrass theorem, there is a sequence of polynomials $\{p_n(x)\}$ such that $p_n(x) \to \sqrt{x}$, uniformly on $[0, M]$. This implies that $\|p_n( Z) - \sqrt{Z}\|\to 0$, for every $Z \in \M$.
 Since
\begin{align*}
  \|(\sqrt{K_\alpha}- \sqrt{K})\xi\|&\leq \|(\sqrt{K_\alpha} - p_n(K_\alpha))\xi\|+\|(p_n(K_\alpha)- p_n(K))\xi\|\\
  &+\|(p_n(K) - \sqrt{K})\xi\|
\end{align*}
 Now choose, $n$ large enough to make the first and third term in the right hand side smaller than $\epsilon>0$ and, fixed this $n$, take $\alpha$ big enough to make the second term smaller than $\epsilon$. The latter is possible since the multiplication is jointly strongly continuous on every norm bounded ball of ${\mc B}(\H)$; thus, if $K_\alpha \stackrel{s}{\to} K$ then, for every polynomial $p$, $p(K_\alpha) \stackrel{s}{\to} p(K)$.
 \end{proof}

 By Lemma \ref{lemma_sfunction} and Lemma \ref{lemma_sqrt} we obtain the following result.

\begin{prop}\label{prop_sfunction2} Let $\{X(\lambda)\}$ be a *-resolution of the identity. Then, the operator valued function $\lambda \mapsto \Phi(\lambda)$, where $\Phi(\lambda):= F(\lambda)^{1/2}= (X(\lambda)^*X(\lambda))^{1/2}$, has the following properties:
\begin{itemize}
\item[(\sf fs$_1$)] $\displaystyle \sup_{\lambda \in {\mb R}} \|\Phi(\lambda)\|={1}$;
\item[(\sf fs$_2$)] $\Phi(\lambda)\leq \Phi(\mu)$ if $\lambda < \mu$;
\item[(\sf fs$_3$)] $\displaystyle\lim_{\lambda \to -\infty}\Phi(\lambda)\xi = 0; \quad \lim_{\lambda \to +\infty}\Phi(\lambda)\xi = \xi, \quad \forall \xi \in \H$;
\item[(\sf fs$_4$)] $\displaystyle \lim_{\epsilon \to 0^+} \Phi(\lambda +\epsilon)\xi= \Phi(\lambda)\xi, \; \forall \lambda \in {\mb R};\;\forall \xi \in \H$.
\end{itemize}
\end{prop}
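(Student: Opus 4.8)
The plan is to deduce each of the four properties of $\Phi(\lambda)=F(\lambda)^{1/2}$ from the corresponding property of $F(\lambda)$ established in Lemma \ref{lemma_sfunction}, using Lemma \ref{lemma_sqrt} and elementary functional calculus for positive bounded operators. First I would note that every $F(\lambda)$ lies in the set $\M$ of Lemma \ref{lemma_sqrt} with $M=1$, since $0\leq F(\lambda)\leq I$ by ({\sf fs}$_1$)--({\sf fs}$_2$) of Lemma \ref{lemma_sfunction}; hence $\Phi(\lambda)$ is well defined, positive, and $\|\Phi(\lambda)\|=\|F(\lambda)\|^{1/2}$, which gives ({\sf fs}$_1$) immediately from $\sup_\lambda\|F(\lambda)\|=1$.

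For ({\sf fs}$_2$), the point is that the square root is operator monotone on positive operators (the Löwner--Heinz inequality): if $0\leq F(\lambda)\leq F(\mu)$ then $F(\lambda)^{1/2}\leq F(\mu)^{1/2}$, i.e. $\Phi(\lambda)\leq\Phi(\mu)$ whenever $\lambda<\mu$. I would cite this standard fact (or, since here everything is bounded by $I$, one can alternatively get it from the integral representation $\sqrt{x}=c\int_0^\infty \frac{x}{x+t}\,\frac{dt}{\sqrt t}$ and the monotonicity of $t\mapsto F(x+t)^{-1}$ in the form $\langle (F+t)^{-1}\xi,\xi\rangle$). For ({\sf fs}$_3$) and ({\sf fs}$_4$), which are statements about strong limits, I would invoke Lemma \ref{lemma_sqrt}: the maps $\lambda\mapsto F(\lambda)$ are strongly convergent as $\lambda\to\pm\infty$ and as $\epsilon\to 0^+$ by ({\sf fs}$_3$)--({\sf fs}$_4$) of Lemma \ref{lemma_sfunction}, and all the operators involved stay in $\M$, so strong continuity of $K\mapsto\sqrt K$ on $\M$ transports these limits to $\Phi$. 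Concretely, $\Phi(\lambda)\xi=\sqrt{F(\lambda)}\,\xi\to\sqrt{0}\,\xi=0$ as $\lambda\to-\infty$, $\Phi(\lambda)\xi\to\sqrt{I}\,\xi=\xi$ as $\lambda\to+\infty$, and $\Phi(\lambda+\epsilon)\xi=\sqrt{F(\lambda+\epsilon)}\,\xi\to\sqrt{F(\lambda)}\,\xi=\Phi(\lambda)\xi$ as $\epsilon\to0^+$.

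I do not expect any serious obstacle: the only non-trivial ingredients are the operator monotonicity of the square root for ({\sf fs}$_2$) and the already-proved Lemma \ref{lemma_sqrt} for the limit statements. The mildest point of care is making sure one really is allowed to apply Lemma \ref{lemma_sqrt}, i.e. that every operator appearing ($F(\lambda)$ for all $\lambda$, together with $0$ and $I$) belongs to the common bounded ball $\M$ with a uniform bound $M=1$; this is exactly ({\sf fs}$_1$) of Lemma \ref{lemma_sfunction}. With that observation in place the proof is a direct transcription of the four properties through the continuous, order-preserving map $K\mapsto\sqrt K$.

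\begin{proof}
By ({\sf fs}$_1$) and ({\sf fs}$_2$) of Lemma \ref{lemma_sfunction}, $0\leq F(\lambda)\leq I$ for every $\lambda\in{\mb R}$, so each $F(\lambda)$ belongs to the set $\M$ of Lemma \ref{lemma_sqrt} with $M=1$, and $\Phi(\lambda)=F(\lambda)^{1/2}$ is a well-defined positive bounded operator.

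({\sf fs}$_1$): $\|\Phi(\lambda)\|=\|F(\lambda)^{1/2}\|=\|F(\lambda)\|^{1/2}$, whence $\sup_{\lambda}\|\Phi(\lambda)\|=\big(\sup_{\lambda}\|F(\lambda)\|\big)^{1/2}=1$.

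({\sf fs}$_2$): If $\lambda<\mu$ then $0\leq F(\lambda)\leq F(\mu)$ by ({\sf fs}$_2$) of Lemma \ref{lemma_sfunction}; since the square root is operator monotone on positive bounded operators, $\Phi(\lambda)=F(\lambda)^{1/2}\leq F(\mu)^{1/2}=\Phi(\mu)$.

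({\sf fs}$_3$) and ({\sf fs}$_4$): These are statements about strong limits. By ({\sf fs}$_3$)--({\sf fs}$_4$) of Lemma \ref{lemma_sfunction} we have $F(\lambda)\stackrel{s}{\to}0$ as $\lambda\to-\infty$, $F(\lambda)\stackrel{s}{\to}I$ as $\lambda\to+\infty$, and $F(\lambda+\epsilon)\stackrel{s}{\to}F(\lambda)$ as $\epsilon\to0^+$, all within $\M$. By Lemma \ref{lemma_sqrt}, $K\mapsto\sqrt{K}$ is strongly continuous on $\M$, so, for every $\xi\in\H$,
\begin{align*}
\Phi(\lambda)\xi=\sqrt{F(\lambda)}\,\xi\to\sqrt{0}\,\xi=0 \quad &(\lambda\to-\infty),\\
\Phi(\lambda)\xi=\sqrt{F(\lambda)}\,\xi\to\sqrt{I}\,\xi=\xi \quad &(\lambda\to+\infty),\\
\Phi(\lambda+\epsilon)\xi=\sqrt{F(\lambda+\epsilon)}\,\xi\to\sqrt{F(\lambda)}\,\xi=\Phi(\lambda)\xi \quad &(\epsilon\to0^+).
\end{align*}
This establishes ({\sf fs}$_3$) and ({\sf fs}$_4$).
\end{proof}
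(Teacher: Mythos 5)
Your proof is correct and follows exactly the route the paper intends: the paper gives no written proof but states that the proposition follows from Lemma \ref{lemma_sfunction} and Lemma \ref{lemma_sqrt}, which is precisely how you derive ({\sf fs}$_1$), ({\sf fs}$_3$) and ({\sf fs}$_4$). The one ingredient not covered by those two lemmas, the operator monotonicity of the square root (L\"owner--Heinz) needed for ({\sf fs}$_2$), you correctly identify and supply.
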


Hence, in analogy to Theorem \ref{thm_opertospectfam}, we have

\begin{thm}\label{thm_opertospectfam2} Let $\Phi(\cdot)= (X(\cdot)^*X(\cdot))^{1/2}$ be the positive operator valued function defined by the *-resolution of the identity $\{X(\lambda)\}$ on the real line.
Set, $$D(S_X)=\left\{\xi \in \H:\, \int_\RN
\lambda ^2 \,d\ip{\Phi(\lambda)\xi}{\xi} < \infty\right\}.$$ Then there exists a unique closed symmetric operator $S_X$, defined on $D(S_X)$  such that
$$ \ip{S_X\xi}{\eta} =\int_\RN \lambda  \,d\ip{\Phi(\lambda)\xi}{\eta}, \quad \forall \xi \in D(S_X),\, \eta \in
\H .$$
\end{thm}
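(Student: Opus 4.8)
The plan is to mimic exactly the construction that produced $T_X$ from $F(\cdot)$ in Theorem~\ref{thm_opertospectfam}, now applied to the positive operator valued function $\Phi(\cdot)$. The decisive input is Proposition~\ref{prop_sfunction2}, which tells us that $\{\Phi(\lambda)\}_{\lambda\in\RN}$ satisfies (\textsf{fs}$_1$)--(\textsf{fs}$_4$) and hence, by the same argument used after Lemma~\ref{lemma_sfunction}, is a \emph{generalized resolution of the identity} in the sense of Definition~\ref{def_naimark}. Consequently one may attach to it a positive operator valued measure $\Phi(\cdot)$ on the Borel sets of $\RN$, defined on bounded half-open intervals by $\Phi(\,]\lambda,\mu]\,)=\Phi(\mu)-\Phi(\lambda)$, then on singletons and on arbitrary Borel sets by the standard extension. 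Everything below is then a verbatim transcription of the reasoning already carried out for $T_X$.

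First I would verify that $D(S_X)$ is dense. This can be done in the way indicated in the remark following Theorem~\ref{thm_opertospectfam}: for $\xi\in\H$ set $\xi_n=(\Phi(n)-\Phi(-n))\xi$ (or, if one prefers to stay closer to the $X$-picture, use the Naimark dilation $\Phi(\lambda)=\mathbf{P}\mathbf{E}'(\lambda)\!\up\H$ in a larger space and cut $\xi$ down by the spectral projections $\mathbf{E}'(]-n,n])$). By (\textsf{fs}$_3$) one has $\|\xi-\xi_n\|\to 0$, and the truncation estimate
\[
\int_\RN\lambda^2\,d\ip{\Phi(\lambda)\xi_n}{\xi_n}\le 2n^2\int_\RN d\ip{\Phi(\lambda)\xi}{\xi}=2n^2\|\xi\|^2<\infty
\]
(using the analogue of Lemma~\ref{lemma_f} for $\Phi$, which holds for the same formal reasons: $\Phi(\mu)\ge\Phi(\lambda)$, $\Phi(\lambda)\Phi(\mu)=\Phi(\lambda)$ in the dilation, etc.) shows $\xi_n\in D(S_X)$. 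Hence $D(S_X)$ is dense.

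Next I would construct $S_X$ itself. For $\xi\in D(S_X)$ the Cauchy sums $\sum_k\lambda_k\ip{\Phi(\Delta_k)\xi}{\eta}$ over partitions of a bounded interval are bounded by $\bigl(\int\lambda^2 d\ip{\Phi(\lambda)\xi}{\xi}\bigr)^{1/2}\|\eta\|$, by the same Cauchy--Schwarz computation used in Lemma~\ref{lemma_218} together with $\sum_k\|\Phi(\Delta_k)\eta\|^2\le\|\eta\|^2$ (the analogue of Lemma~\ref{lemma_domain}, again immediate from the Naimark dilation of $\Phi$). Therefore the conjugate-linear functional $\eta\mapsto\int_\RN\lambda\,d\ip{\Phi(\lambda)\xi}{\eta}$ is bounded, and by the Riesz representation theorem there is a unique vector, call it $S_X\xi$, with $\ip{S_X\xi}{\eta}=\int_\RN\lambda\,d\ip{\Phi(\lambda)\xi}{\eta}$ for all $\eta\in\H$. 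Symmetry of $S_X$ follows because $\Phi(\lambda)=\Phi(\lambda)^*$ makes $d\ip{\Phi(\lambda)\xi}{\eta}=\overline{d\ip{\Phi(\lambda)\eta}{\xi}}$, so $\ip{S_X\xi}{\eta}=\overline{\ip{S_X\eta}{\xi}}=\ip{\xi}{S_X\eta}$ for $\xi,\eta\in D(S_X)$. Closedness is the standard fact that an operator defined on $\{\xi:\int\lambda^2\,d\ip{\Phi(\lambda)\xi}{\xi}<\infty\}$ via a positive operator valued measure is closed: if $\xi_n\to\xi$ and $S_X\xi_n\to\zeta$, then for every bounded Borel set $\Delta$ one has $\int_\Delta\lambda\,d\ip{\Phi(\lambda)\xi}{\eta}=\lim_n\int_\Delta\lambda\,d\ip{\Phi(\lambda)\xi_n}{\eta}$, and Fatou applied to the measures $d\ip{\Phi(\lambda)\xi_n}{\xi_n}$ gives $\int\lambda^2\,d\ip{\Phi(\lambda)\xi}{\xi}\le\liminf_n\|S_X\xi_n\|^2<\infty$, whence $\xi\in D(S_X)$ and $S_X\xi=\zeta$.

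I expect the only genuinely delicate point to be the closedness of $S_X$ and, relatedly, the precise sense in which the iterated Stieltjes integrals are manipulated — i.e.\ making sure the monotone convergence/Fatou arguments on the family of positive measures $\{d\ip{\Phi(\lambda)\xi}{\xi}\}$ are applied correctly as $\xi_n\to\xi$. Everything else (density, boundedness of the functional, symmetry, uniqueness) is a literal repetition of the arguments already in place for $T_X$ in Theorem~\ref{thm_opertospectfam} and Lemma~\ref{lemma_218}, with $F$ replaced by $\Phi$ throughout; indeed, since Proposition~\ref{prop_sfunction2} grants $\Phi$ exactly the same four properties that $F$ enjoyed, no new structural idea is required, and one could legitimately phrase the proof as: ``the proof is word for word that of Theorem~\ref{thm_opertospectfam}, with $F(\lambda)$ replaced by $\Phi(\lambda)=F(\lambda)^{1/2}$.''
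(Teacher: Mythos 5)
Your overall strategy coincides with the paper's: the text offers no separate proof of Theorem \ref{thm_opertospectfam2}, but simply derives it ``in analogy to Theorem \ref{thm_opertospectfam}'' from Proposition \ref{prop_sfunction2}, which makes $\{\Phi(\lambda)\}$ a generalized resolution of the identity in the sense of Definition \ref{def_naimark}; the Naimark dilation $\Phi(\lambda)=\mathbf{P}\mathbf{E}'(\lambda)\upharpoonright\H$ and the Cauchy--Schwarz estimate on the Cauchy sums then produce a symmetric operator exactly as you describe. The parts of your argument that use only (\textsf{fs}$_1$)--(\textsf{fs}$_4$) and the dilation --- boundedness of the functional $\eta\mapsto\int_\RN\lambda\,d\ip{\Phi(\lambda)\xi}{\eta}$, the analogue of Lemma \ref{lemma_domain}, symmetry, uniqueness --- are fine and match the paper.

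The gap is in the phrase ``the analogue of Lemma \ref{lemma_f} for $\Phi$ holds for the same formal reasons: $\Phi(\mu)\ge\Phi(\lambda)$, $\Phi(\lambda)\Phi(\mu)=\Phi(\lambda)$ in the dilation, etc.'' Lemma \ref{lemma_f} is not formal: it rests on the multiplicative identities $X(\lambda)X(\mu)=X(\lambda)$ and on the factorization $F=X^*X$, and these are destroyed by the operator square root. In general the $\Phi(\lambda)$ are neither idempotent nor mutually commuting, so $\Phi(\lambda)\Phi(\mu)\ne\Phi(\lambda)$; and although $X(\mu)^*\Phi(\lambda)^2X(\mu)=\Phi(\lambda)^2$ for $\lambda\le\mu$, this does not give $X(\mu)^*\Phi(\lambda)X(\mu)=\Phi(\lambda)$. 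The product identities survive only for the dilated projections $\mathbf{E}'(\lambda)$, which do not map $\H$ into $\H$. This breaks the two genuinely delicate steps, in exactly the places you flagged. First, density of $D(S_X)$: the truncation estimate $\int_\RN\lambda^2\,d\ip{\Phi(\lambda)\xi_n}{\xi_n}\le 2n^2\|\xi\|^2$ requires moving $\Phi(\lambda)$ past the truncating operators, which is what Lemma \ref{lemma_f} achieved for $F$ and which is unavailable for $\Phi$ with either choice of $\xi_n$; note that for a family satisfying only (\textsf{fs}$_1$)--(\textsf{fs}$_4$) the set $D(S_X)=\H\cap D(\bm{A}')$ need not be dense (it can even be trivial), so some property specific to $\Phi=(X^*X)^{1/2}$ must enter. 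Second, closedness: your Fatou step needs $\|S_X\xi_n\|^2=\int_\RN\lambda^2\,d\ip{\Phi(\lambda)\xi_n}{\xi_n}$, i.e.\ the analogue of the computation in Lemma \ref{lemma_218}, which again uses Lemma \ref{lemma_f}; for $\Phi$ one only has $\|S_X\xi\|=\|\mathbf{P}\bm{A}'\xi\|\le\|\bm{A}'\xi\|$, possibly strictly, so convergence of $S_X\xi_n$ does not control $\liminf_n\int_\RN\lambda^2\,d\ip{\Phi(\lambda)\xi_n}{\xi_n}$. These two points are precisely where the proof is not ``word for word'' that of Theorem \ref{thm_opertospectfam}, and they require a genuine argument that neither your proposal nor the paper supplies.
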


\noindent{\bf Question:} Is there any relationship between the operators $B$, $T_X$ and $S_X$? Clearly, if $\{X(\lambda)\}$ is a self-adjoint resolution of the identity, then the three operators coincide. So far we know that $B$ and $T_X$ have the same domain but we do not know if and how $B$ can be expressed in terms of $T_X$. About the relationship between $B$ or $T_X$ we do not know almost anything, so we leave this question open.

\bigskip
\noindent{\bf Acknowledgements --} The authors thank the referee for pointing out a serious inaccuracy in a previous version of this paper. 

\medskip


\begin{thebibliography}{99}
\bibitem{jpa_ct_pipandmetric} J.-P. Antoine and C. Trapani, {\em Partial inner product spaces, metric operators and
generalized hermiticity}, J. Phys. A: Math. Theor. {\bf 46} (2013) 025204 (21pp)
%\bibitem{Kantor} S. Kantorowitz and R. J. Hughes, {\em Spectral representation for unbounded operators with real spectrum}, Mathematische Annalen, {\bf 282} (1988) 535-544
\bibitem{bender} C.M. Bender, A. Fring, U. G\"unther and H. Jones, {\em Quantum physics with non-Hermitian
operators}, J. Phys. A: Math. Theor. 45 (2012) 440301
\bibitem{mackey} G.W. Mackey, {\em Commutative Banach Aalgebras}, Notas de Matematica n. 17, Rio de Janeiro, 1959.
\bibitem{mosta1}A.~Mostafazadeh, {\em Pseudo-Hermitian representation of quantum mechanics}, {Int. J. Geom. Methods Mod. Phys.} {\bf 7} (2010)  1191--1306
\bibitem{riesz}F. Riesz and B. Sz. Nagy, Lecons d'Analyse fonctionelle, Gauthier-Villars, Paris (1972)
\bibitem{Akhiezer}N.I. Akhiezer and I.M. Glazman,  Theory of Linear Operators in Hilbert space, II,  Dover Publ. (1993)

\end{thebibliography}
\end{document}